\documentclass[a4paper]{amsart}
\usepackage[utf8]{inputenc}
\usepackage[T1]{fontenc}
\usepackage{graphicx}
\usepackage{authblk}
\usepackage{amsaddr}
\usepackage{tikz} 
\usetikzlibrary{calc} 

\usepackage{amsmath,amssymb}
\usepackage{amsthm}
\usepackage{thmtools}
\usepackage{hyperref}

\newcommand{\Nat}{\mathbb{N}}
\newcommand{\Real}{\mathbb{R}}
\newcommand{\dd}{\mathrm{d}}
\newcommand{\Ind}{\mathbb{I}}
\newcommand{\on}{|_}

\newcommand{\func}[3] {#1~:~#2~\rightarrow~#3}

\newcommand{\Prob}{\mathbb{P}}
\newcommand{\Exp}{\mathbb{E}}
\newcommand{\Var}{\mathbb{V}}
\newcommand{\eqD}{\overset{\rm d}{\sim}}
\newcommand{\leD}{\leq_d}


\newcommand{\Bin}{\text{\rm Bin}}
\newcommand{\Mult}{\text{\rm Mult}}


\declaretheorem[name=Definition,numberwithin=section]{definition}
\declaretheorem[name=Remark,sibling=definition]{remark}

\declaretheorem[name=Proposition,sibling=definition]{proposition}
\declaretheorem[name=Lemma,sibling=definition]{lemma}
\declaretheorem[name=Theorem,sibling=definition]{theorem}
\declaretheorem[name=Corollary,sibling=definition]{corollary}


\newcommand{\dist}[3][] {d_{#1}(#2,#3)}
\newcommand{\diam}[1] {\text{\rm diam}(#1)}
\newcommand{\nhood}[1][] {
\def\ARGI{{#1}}%
\nhoodI}
\newcommand{\nhoodI}[2][] {\mathcal{N}^{\ARGI}_{#1}(#2)}
\newcommand{\Nhood}[1][] {
\def\ARGI{{#1}}%
\NhoodI}
\newcommand{\NhoodI}[2][] {\mathcal{N}^{\ARGI}_{#1}[#2]}

\newcommand{\df}[1] {\phi_{#1}} 
\newcommand{\dsed} {\Delta} 

\title{Local symmetry in random graphs}

\author[J. E. Simões]{Jefferson Elbert Simões}
\author[D. R. Figueiredo]{Daniel R. Figueiredo}
\author[V. C. Barbosa]{Valmir C. Barbosa}
\address{Systems Engineering and Computer Science Program, COPPE \\
Federal University of Rio de Janeiro, Rio de Janeiro, Brazil}
\email{\{elbert,daniel\}@land.ufrj.br, valmir@cos.ufrj.br}
\thanks{This work has been partially funded
by CAPES, CNPq, and FAPERJ BBP grants.}
\begin{document}

\begin{abstract}
Quite often real-world networks can be thought of as being symmetric,
in the abstract sense that vertices can be found
to have similar or equivalent structural roles.
However, traditional measures of symmetry in graphs
are based on their automorphism groups,
which do not account for the similarity of local structures.
We introduce the concept of \emph{local symmetry},
which reflects the structural equivalence of the vertices' egonets.
We study the emergence of asymmetry in the Erdős-Rényi random graph model
and identify regimes of both
asymptotic local symmetry and asymptotic local asymmetry.
We find that local symmetry persists at least
to an average degree of $n^{1/3}$
and local asymmetry emerges at an average degree not greater than $n^{1/2}$,
which are regimes of much larger average degree
than for traditional, global asymmetry.
\end{abstract}

\maketitle

\section{Introduction}

Graphs have become some of the most versatile mathematical objects,
capable of representing a wide range of real-world structures
such as the Internet, neural networks and
scientific collaboration networks, to name a few.
A natural question to pose in many of these scenarios is:
what is the meaning of symmetry in the context of graphs?
Several definitions have been proposed for this term,
each more adequate to a certain context or application.
Usually, such definitions are based upon transformations over graphs
that preserve certain properties of their structures,
the most traditional one being the existence of non-trivial automorphisms.
Such automorphisms allow us to classify the vertices of a graph
according to their ``role'' in its structure.

The concept of symmetry is closely tied to
that of \emph{structural identity}~\cite{StructIdentEquivIndivSocNet},
which is the identification of vertices based on
features of the network structure and their relationships to other vertices.
Structural identity can be applied in several contexts,
such as network privacy~\cite{Anonymized-Hidden-Steganography}.
For instance, data on social networks including personal information
is usually anonymized by the removal of labels,
in an attempt to preserve the privacy of their members (vertices).
If such network is symmetric, vertices with equal structural roles
cannot be distinguished without some kind of side information,
which makes them more likely to resist attempts at
deanonymization~\cite{ResistingStructReident}.

However, in several applied contexts,
the role of a vertex in the network structure,
and as a consequence its structural identity,
is intuitively related to its vicinity in the network,
rather than the whole network.
For instance, in neural networks (in which vertices correspond to neurons,
and edges to the synapses between them),
different areas of the brain become responsible for
specific functions, such as memory or motor coordination~\cite{NetworksBrain}.
In another example, in networks such as the web,
nodes can be assigned roles such as \emph{hub} or \emph{authority},
depending on how they are connected to their own
vicinities~\cite{AuthSourcesHyperlinkEnvironment}.

Our contributions are as follows.
We propose a definition of \emph{local symmetry},
based on the structural similarity of neighborhoods around each vertex.
Our definition naturally induces a hierarchy of symmetries,
which progressively use more information for classifying vertices,
ultimately culminating in the traditional, automorphism-based symmetry,
which we call \emph{global symmetry} in the context of this work.
Furthermore, we study the emergence of our base form of symmetry
in the context of the Erdős-Rényi random graph.
We find that, relative to global symmetry, asymptotic local symmetry occurs
for graphs with much larger average degrees,
in particular for degrees growing as fast as $n^{1/3}$ (\autoref{1-loc-sym}),
while asymptotic local asymmetry eventually emerges,
at most at degrees slightly larger than $n^{1/2}$ (\autoref{asym}).

\section{Local symmetry}
The abstract concept of symmetry
is traditionally embodied in the context of graphs
by the notions of \emph{isomorphism} and \emph{automorphism}.
Two graphs $G_1=(V_1,E_1)$ and $G_2=(V_2,E_2)$ are said to be
\emph{isomorphic} if there is a function $\func{f}{V_1}{V_2}$,
called an \emph{isomorphism} between $G_1$ and $G_2$,
that is bijective and precisely maps the edges of $G_1$ into edges of $G_2$.
This is a known equivalence relation on the set of all graphs,
and it can be thought of as identifying graphs with their edge structure,
ignoring the nature of their vertices or the interpretation of their edges.
Indeed, this view is implicit in traditional graph theory expressions
such as ``up to isomorphism''.

An isomorphism between a graph $G=(V,E)$ and $G$ itself is called
an \emph{automorphism} of $G$.
Every graph has a so-called \emph{trivial} automorphism given by
the identity function $\mathbb{I}_V$ over $V$, 
and a graph having no other automorphisms is said to be
\emph{asymmetric}~\cite{AsymGraphs}.
The set of automorphisms of a graph possesses a group structure
when coupled with the operation of composition,
which induces equivalence classes in its vertex set.
Again, there is a natural interpretion to this fact:
vertices can be grouped according to their placement in the graph structure,
such that vertices in the same class are ``structurally indistinguishable'',
at least without additional information about the identities of
(potentially all) the remaining vertices.
This interpretation leads us to the following definitions:

\begin{definition}
Given a graph $G=(V,E)$,
two vertices $v_1,v_2\in V$ are \emph{globally symmetric}
if there is an automorphism $f$ of $G$ such that $f(v_1) = v_2$.
\label{simet-gl-vertices}
\end{definition}

\begin{definition}
Let $G=(V,E)$ be a graph. Then $G$ is said to be \emph{globally symmetric}
if there are $u,v\in V$ distinct and globally symmetric.
\label{simet-gl-1grafo}
\end{definition}

\begin{remark}
$G$ is globally symmetric if and only if
it has at least one non-trivial automorphism.
\label{trad-simet}
\end{remark}

Throughout this paper, we will use the term ``globally asymmetric''
both for pairs of vertices and for graphs that are not globally symmetric.
Alternative definitions, specifically in the literature of probability theory,
have employed the simpler term \emph{symmetric graph}
for graphs satisfying \autoref{trad-simet}~\cite{AsymGraphs},
but this conflicts with at least two known graph-theoretic definitions
for the term
``symmetric graph''~\cite[p.104]{Biggs-AlgebGraphTheory}~\cite{PetersenGraph}.
%
Thus, introducing a new term in this definition may help avoid ambiguities.

One of the greatest difficulties in studying symmetry in graphs
lies in quantifying what it means for a graph to be ``almost symmetric''.
The usefulness of such quantification comes from the fact that,
in many real-world networks, one can identify intuitively equivalent vertices that the definition of global symmetry falls short in capturing.
For instance, consider the network in \autoref{fig:quase}.
Intuitively, the vertices $u$ and $v$ can be seen as small ``hubs'',
who share almost equally the role of connecting the remaining,
``peripheral'' vertices, which are also intuitively equivalent to each other.
However, since $u$ and $v$ have different degrees,
no automorphism is able to map them onto one another,
or map a peripheral neighbor of $u$ onto a peripheral neighbor of $v$.
Thus, the equivalence classes of vertices found on this graph
do not capture this intuitive notion of symmetry.

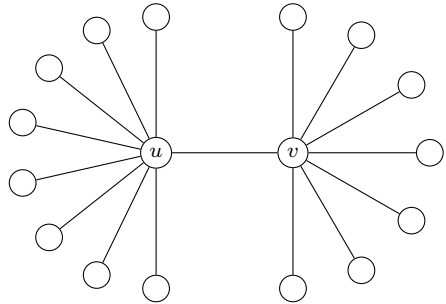
\begin{figure}[h]
\centering
        \begin{tikzpicture}[scale=0.9,every node/.style={circle,minimum size=1em,inner sep=2}]
        \node[draw=black] at (0,0) (u) {\footnotesize $u$};
        \node[draw=black] at (2,0) (v) {\footnotesize $v$};
        \path[draw=black] (u) -- (v);
        \def\uneigh{8}
        \foreach \un in {1,...,\uneigh}{
            \node[draw=black] at ($ (u) + ({90+(\un-1)*180/(\uneigh-1)}:2) $) (u\un) {};
            \path[draw=black] (u) -- (u\un);
        }
        \def\vneigh{7}
        \foreach \vn in {1,...,\vneigh}{
            \node[draw=black] at ($ (v) + ({90-(\vn-1)*180/(\vneigh-1)}:2) $) (v\vn) {};
            \path[draw=black] (v) -- (v\vn);
        }
        \end{tikzpicture}

\caption[Example of ``almost symmetry''.]{ Example of ``almost symmetric'' graph. Vertices $u$ and $v$ connect their neighbors to the remainder of the graph.}
\label{fig:quase}
\end{figure}

While several measures have been proposed
as a more flexible notion of symmetry~\cite{AsymGraphs,MaxSymSubgraphs},
no proposal seems to have achieved wide acceptance,
because of both the computational complexity of calculating such measures
and the absence of knowledge about their relationship
to real-world networks and applications.

A second difficulty, which we address in this work
and that has barely been analyzed in the literature,
lies in identifying vertices that (again intuitively)
have equivalent local structures but are globally distinguishable.
The graph in \autoref{fig:loc-eq} illustrates this idea well.
In this figure, we highlight two isomorphic induced subgraphs.
However, since both these subgraphs are part of a larger graph,
the existence of an automorphism that reflects such symmetry
depends on the edge pattern of the remainder of the graph,
and in this particular example the desired automorphism does not exist.
In other words, even though these subgraphs are intuitively symmetric,
since they are connected differently to the graph,
they are not globally symmetric.

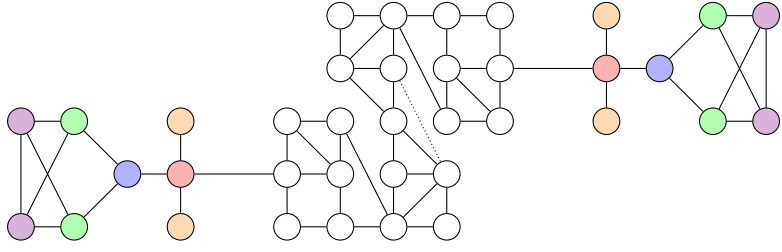
\begin{figure}[ht]
\centering
        \begin{tikzpicture}[scale=0.7,every node/.style={circle,minimum size=1em,inner sep=2}]
        \node[draw=black, fill=blue!30] at (0,-1) (a1) {};
        \node[draw=black, fill=green!30] at ($ (a1) + (-1,-1) $) (b1) {};
        \node[draw=black, fill=green!30] at ($ (a1) + (-1,1) $) (c1) {};
        \node[draw=black, fill=violet!30] at ($ (b1) + (-1,0) $) (d1) {};
        \node[draw=black, fill=violet!30] at ($ (c1) + (-1,0) $) (e1) {};
        \node[draw=black, fill=red!30] at ($ (a1) + (1,0) $) (f1) {};
        \node[draw=black, fill=orange!30] at ($ (f1) + (0,1) $) (g1) {};
        \node[draw=black, fill=orange!30] at ($ (f1) + (0,-1) $) (h1) {};
        \path[draw=black] (e1) -- (b1) -- (a1) -- (c1) -- (d1) (b1) -- (d1) -- (e1) -- (c1) (a1) -- (f1) -- (g1) (h1) -- (f1);

        \node[draw=black, fill=blue!30] at (10,1) (a2) {};
        \node[draw=black, fill=green!30] at ($ (a2) + (1,-1) $) (b2) {};
        \node[draw=black, fill=green!30] at ($ (a2) + (1,1) $) (c2) {};
        \node[draw=black, fill=violet!30] at ($ (b2) + (1,0) $) (d2) {};
        \node[draw=black, fill=violet!30] at ($ (c2) + (1,0) $) (e2) {};
        \node[draw=black, fill=red!30] at ($ (a2) + (-1,0) $) (f2) {};
        \node[draw=black, fill=orange!30] at ($ (f2) + (0,1) $) (g2) {};
        \node[draw=black, fill=orange!30] at ($ (f2) + (0,-1) $) (h2) {};
        \path[draw=black] (e2) -- (b2) -- (a2) -- (c2) -- (d2) (b2) -- (d2) -- (e2) -- (c2) (a2) -- (f2) -- (g2) (h2) -- (f2);

        \node[draw=black] at (3,0) (a3) {};
        \node[draw=black] at ($ (a3) + (1,0) $) (b3) {};
        \node[draw=black] at ($ (b3) + (1,0) $) (c3) {};
        \node[draw=black] at ($ (c3) + (1,0) $) (d3) {};
        \node[draw=black] at ($ (d3) + (1,0) $) (e3) {};
        \node[draw=black] at ($ (b3) + (0,1) $) (f3) {};
        \node[draw=black] at ($ (c3) + (0,1) $) (g3) {};
        \node[draw=black] at ($ (d3) + (0,1) $) (h3) {};
        \node[draw=black] at ($ (e3) + (0,1) $) (i3) {};
        \node[draw=black] at ($ (a3) + (0,-1) $) (j3) {};
        \node[draw=black] at ($ (b3) + (0,-1) $) (k3) {};
        \node[draw=black] at ($ (c3) + (0,-1) $) (l3) {};
        \node[draw=black] at ($ (d3) + (0,-1) $) (m3) {};
        \node[draw=black] at ($ (f3) + (0,1) $) (n3) {};
        \node[draw=black] at ($ (g3) + (0,1) $) (o3) {};
        \node[draw=black] at ($ (h3) + (0,1) $) (p3) {};
        \node[draw=black] at ($ (i3) + (0,1) $) (q3) {};
        \node[draw=black] at ($ (j3) + (0,-1) $) (r3) {};
        \node[draw=black] at ($ (k3) + (0,-1) $) (s3) {};
        \node[draw=black] at ($ (l3) + (0,-1) $) (t3) {};
        \node[draw=black] at ($ (m3) + (0,-1) $) (u3) {};
        
        \path[draw=black] (f1) -- (j3) -- (k3) -- (b3) -- (a3) -- (j3) -- (r3) -- (s3) -- (t3) -- (u3) -- (m3) -- (l3) -- (c3) (f2) -- (i3) -- (h3) -- (d3) -- (e3) -- (i3) -- (q3) -- (p3) -- (o3) -- (n3) -- (f3) -- (g3) -- (c3)
        (a3) -- (k3) -- (s3) (b3) -- (t3) -- (m3) -- (c3) (t3) -- (l3)
        (e3) -- (h3) -- (p3) (d3) -- (o3) -- (f3) -- (c3) (o3) -- (g3);
        \path[draw=black, densely dotted] (g3) -- (m3);
        \end{tikzpicture}

\caption[Example of local equivalence.]{ Example of equivalence between local structures in a graph.}\label{fig:loc-eq}
\end{figure}

Note that, regardless of the structure of the remainder of the graph,
and despite the fact that the intuitive equivalence
between the two local structures continues to exist,
the two aforementioned difficulties can happen simultaneously.
For instance, in the example of \autoref{fig:loc-eq},
by removing the single edge identified by a dashed line and
located at least at distance 5 from the highlighted subgraphs,
the graph acquires an automorphism that maps these subgraphs precisely.
Hence, the existence of this automorphism ---
a global mapping satisfying local restrictions ---
may be sensitive to changes that seem unpretentious
and unrelated to the local structures of interest.

To proceed more precisely, we must first delineate
which regions we are interested in analyzing.
Given a graph $G=(V,E)$ and a vertex set $S\subseteq V$,
we define $\nhood[][G]{S}$, the \emph{open neighborhood} of $S$ in $G$:
$$\nhood[][G]{S} = \{v\in V~:~\dist{v}{S}\leq 1\},$$
and the \emph{closed neighborhood} of $S$ in $G$:
$$\Nhood[][G]{S} = G[\nhood[][G]{S}],$$
where, for every set $A\subseteq V$ of vertices,
$\dist{v}{A}$ is the smallest distance between $v$ and some vertex of $A$,
and $G[A]$ is the subgraph induced by the vertices of $A$. 
Naturally, $\nhood[][G]{\{v\}}$ (or, by simplicity, $\nhood[][G]{v}$)
is the set that comprises $v$ and its neighbors,
and $\Nhood[][G]{\{v\}}$ (or $\Nhood[][G]{v}$)
is the subgraph induced by $v$ and its neighbors.
We will omit the index $_G$ when the graph at hand is clear.

This definition of neighborhood is traditional
in the field of graph theory\footnote{Actually,
the convention in graph theory is to define
$\nhood[][G]{S} = \{v\in V~:~\dist{v}{S}= 1\}$.
The reason we choose this alternate definition should be clear
in the remainder of this text.},
and we can extend it to include not only vertices at distance 1,
but at distance $k$.
We achieve this goal easily by defining
the \emph{open $k$-neighborhood} of $S$ in $G$:
$$\nhood[k][G]{S} = \{v\in V~:~\dist{v}{S}\leq k\}.$$
Note that $\nhood[0]{S} = S$, $\nhood[1]{S} = \nhood{S}$ and, recursively,
$\nhood[k]{S} = \nhood[]{\nhood[k-1]{S}}$.
We define the \emph{closed $k$-neighborhood} of $S$ in $G$ similarly:
$$\Nhood[k][G]{S} = G[\nhood[k][G]{S}].$$

We illustrate these definitions in \autoref{fig:vizinhanca}.
Note that $\nhood[k]{S} \subseteq \nhood[k+1]{S}$
and that $\Nhood[k]{S}$ is an induced subgraph of $\Nhood[k+1]{S}$.
We also note that, in contexts such as mathematical sociology,
the closed neighborhood of a vertex is known as
an \emph{egonet}~\cite{SocNetAnalysis,NetworkParadigm}.

\begin{figure}[ht]
\centering
        \begin{tikzpicture}[scale=0.9,node distance=3em,every node/.style={circle,minimum size=1.5em,inner sep=2}]
        \node[draw=red!70!black, very thick, fill=red!30] at (0,0) (v) {$v$};
        \node[draw=red!70!black, very thick, fill=red!30, above of=v] (w) {$w$};
        \node[draw=red!70!black, very thick, fill=red!30, above left of=v] (w1) {};
        \node[draw=red!70!black, very thick, fill=red!30, above right of=v] (w2) {};
        \node[draw=orange!70!black, fill=orange!30, left of=v] (w3) {};
        \node[draw=red!70!black, very thick, fill=red!30, right of=v] (w4) {};
        \node[draw=red!70!black, very thick, fill=red!30, below left of=v] (w5) {};
        \node[draw=orange!70!black, fill=orange!30, below right of=v] (w6) {};
        \node[draw=orange!70!black, fill=orange!30, above of=w] (x1) {};
        \node[draw=orange!70!black, fill=orange!30, above of=w1] (x2) {};
        \node[draw=orange!70!black, fill=orange!30, above left of=w1] (x3) {};
        \node[draw=orange!70!black, fill=orange!30, above of=w2] (x4) {};
        \node[draw=orange!70!black, fill=orange!30, above right of=w2] (x5) {};
        \node[draw=orange!70!black, fill=orange!30, above left of=w3] (x6) {};
        \node[draw=black, left of=w3] (x7) {};
        \node[draw=orange!70!black, fill=orange!30, below left of=w3] (x8) {};
        \node[draw=orange!70!black, fill=orange!30, above right of=w4] (x9) {};
        \node[draw=orange!70!black, fill=orange!30, right of=w4] (x10) {};
        \node[draw=black, below right of=w4] (x11) {};
        \node[draw=black, above of=x1] (y1) {};
        \node[draw=black, above of=x2] (y2) {};
        \node[draw=black, above left of=x2] (y3) {};
        \node[draw=black, above of=x4] (y4) {};
        \node[draw=black, above right of=x4] (y5) {};
        \node[draw=black, above left of=x3] (y6) {};
        \node[draw=black, left of=x3] (y7) {};
        \node[draw=black, above right of=x5] (y8) {};
        \node[draw=black, right of=x5] (y9) {};
        \node[draw=black, left of=x6] (y10) {};
        \node[draw=black, right of=x9] (y11) {};
        \node[draw=black, left of=x8] (y12) {};
        \node[draw=black, right of=x11] (y13) {};
        
        \path[draw=red!70!black, very thick] (v) -- (w) -- (w1) -- (v) -- (w2) -- (w4) -- (v) -- (w5) (w1) -- (w4) -- (w5) (w)--(w2);
        \path[draw=orange!70!black] (w1) -- (w3) -- (w5) (w2) -- (w3) -- (w6) -- (w4);
        \path[draw=orange!70!black] (x1) -- (w) -- (x2) -- (w1) -- (x3) -- (x2) -- (x1) -- (x4) -- (x5) -- (w2) -- (x4) -- (w) (x3) -- (x6) -- (w3) (w5) -- (x8) (x5) -- (x9) -- (w4) -- (x10) (x4) -- (x6) -- (w1) (x2) -- (x9) -- (w2) (w3) -- (x3) (w4) -- (x5);
        
        \path[draw=black, densely dotted] (w3) -- (x7) (w6) -- (x11) (y1) -- (x1) -- (y2) -- (x2) -- (y3) -- (x3) -- (y6) -- (y3) -- (y2) -- (y1) -- (y4) -- (y5) -- (y8) -- (x5) -- (y5) -- (x4) -- (y4) -- (x1) (y6) -- (y7) -- (x3) -- (y10) -- (x6) -- (y7) -- (y10) -- (x7) -- (x8) -- (y12) -- (x7) (y8) -- (y9) -- (x5) -- (y11) -- (x9) -- (y9) -- (y11) -- (x10) -- (x11) -- (y13) -- (x10);
        \end{tikzpicture}

\caption[Example of neighborhoods.]{ Examples of neighborhoods of a vertex in a graph. The subgraph highlighted in red is $\Nhood[1]{v} = \Nhood{v}$, and including the orange highlights we obtain $\Nhood[2]{v}$. The vertex sets of these subgraphs are, respectively, $\nhood[1]{v} = \nhood{v}$ and $\nhood[2]{v}$. For $k \geq 3$, $\Nhood[k]{v}=G$.}\label{fig:vizinhanca}
\end{figure}
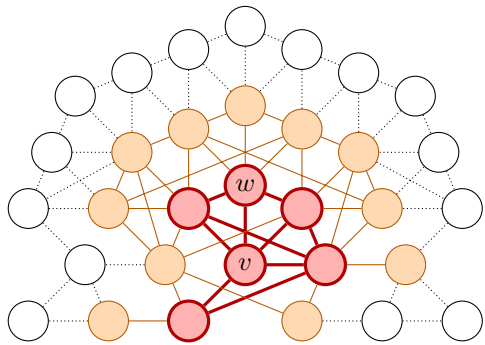

For the purposes of this work, we use closed $k$-neighborhoods
around single vertices as a proxy for ``locality''.
This motivates the following definition:

\begin{definition}
\label{simet-loc-vertices}
Given a graph $G=(V,E)$,
two vertices $v_1,v_2\in V$ are \emph{$k$-locally symmetric}
if there is an isomorphism $f$
between $\Nhood[k]{v_1}$ and $\Nhood[k]{v_2}$
such that $f(v_1) = v_2$.
\end{definition}

Therefore, two vertices $v_1$ and $v_2$ are $k$-locally symmetric
if the $k$-th order local structures in which $v_1$ and $v_2$ are located
are equivalent, with $v_1$ and $v_2$ equivalently located in these structures.
If $k = 1$, we will briefly say that the vertices are \emph{locally symmetric}.

One interesting feature of $k$-local symmetry
is that it naturally leads to the construction of a symmetry hierarchy,
which includes global symmetry as the most restrictive one,
as evinced by the following result.

\begin{proposition}
\label{hierarquia}
Let $v_1$ and $v_2$ be vertices of $G=(V,E)$, and let $k\in\Nat$.
Then the following statements hold:
\begin{enumerate}
\item If $v_1$ and $v_2$ are $(k+1)$-locally symmetric,
then $v_1$ and $v_2$ are $k$-locally symmetric;
\item If $k \geq \diam{G}$, then $v_1$ and $v_2$ are $k$-locally symmetric
if and only if $v_1$ and $v_2$ are globally symmetric.
\end{enumerate}
\end{proposition}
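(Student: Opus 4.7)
The plan is to handle the two parts separately, since they are essentially decoupled observations about how $k$-local symmetry behaves as $k$ varies.

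For part (1), my approach is to show that an isomorphism $f$ witnessing $(k+1)$-local symmetry between $v_1$ and $v_2$ restricts to an isomorphism witnessing $k$-local symmetry. The key observation I would prove first is a distance lemma: for every $u\in\nhood[k+1]{v_i}$, the graph distance from $v_i$ to $u$ measured inside the induced subgraph $\Nhood[k+1]{v_i}$ equals $\dist{v_i}{u}$ measured in $G$. This holds because any shortest $v_i$--$u$ path in $G$ has all its intermediate vertices at distance at most $k+1$ from $v_i$ (by a triangle-inequality argument), so the whole path lies inside $\Nhood[k+1]{v_i}$. Combining this with the fact that graph isomorphisms preserve distances inside their domain gives $f(\nhood[k]{v_1})=\nhood[k]{v_2}$: indeed, $u\in\nhood[k]{v_1}$ iff $\dist{v_1}{u}\le k$, iff the corresponding distance inside $\Nhood[k+1]{v_1}$ is at most $k$, iff the same is true of $f(u)$ in $\Nhood[k+1]{v_2}$, iff $f(u)\in\nhood[k]{v_2}$. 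Since $\Nhood[k]{v_i}$ is by definition the subgraph of $\Nhood[k+1]{v_i}$ induced by $\nhood[k]{v_i}$, the restriction $f|_{\nhood[k]{v_1}}$ is automatically an isomorphism from $\Nhood[k]{v_1}$ to $\Nhood[k]{v_2}$ sending $v_1$ to $v_2$.

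For part (2), my approach is to note that when $k\geq\diam{G}$, every vertex of $V$ is within distance $k$ of any prescribed vertex, so $\nhood[k]{v}=V$ and hence $\Nhood[k]{v}=G[V]=G$ for all $v\in V$. In particular, $\Nhood[k]{v_1}=\Nhood[k]{v_2}=G$, so an isomorphism between these two graphs sending $v_1$ to $v_2$ is by definition an automorphism of $G$ with that property, and conversely. Both implications then follow immediately from \autoref{simet-loc-vertices} and \autoref{simet-gl-vertices}. (If $G$ is disconnected then $\diam{G}$ is $\infty$ and the hypothesis is vacuous, so nothing need be said in that case.)

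I do not expect any step to pose real difficulty, as both parts are largely definition chasing. The only mild subtlety is the distance-in-subgraph-equals-distance-in-graph observation underlying part (1); this is essentially the triangle inequality combined with the fact that being within distance $k$ from $v_i$ is preserved along prefixes of a shortest path, and so should be routine to write up carefully.
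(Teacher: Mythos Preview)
Your proposal is correct and follows essentially the same approach as the paper's own proof: restrict the $(k+1)$-level isomorphism using distance preservation for part (1), and observe $\Nhood[k]{v}=G$ when $k\ge\diam{G}$ for part (2). If anything, you are more careful than the paper, which invokes ``isomorphisms preserve distances'' and then tacitly identifies distances inside $\Nhood[k+1]{v_i}$ with distances in $G$; your explicit distance lemma (that shortest $v_i$--$u$ paths stay inside $\Nhood[k+1]{v_i}$) is exactly the missing justification for that step.
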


\begin{proof}
If $v_1$ and $v_2$ are $(k+1)$-locally symmetric,
then there is an isomorphism $f$
between $\Nhood[k+1]{v_1}$ and $\Nhood[k+1]{v_2}$ with $f(v_1) = v_2$.
Since isomorphisms preserve distances, for any $d\in\Nat$,
two vertices $u$ and $v$ of $\Nhood[k+1]{v_1}$ are at distance $d$
if and only if $f(u)$ and $f(v)$ are at distance $d$ in $\Nhood[k+1]{v_2}$.
Therefore, for every vertex $w$ in $\Nhood[k+1]{v_2}$,
it is true that $w\in\Nhood[k]{v_1} \iff f(w)\in\Nhood[k]{v_2}$.
This means that $f\on{\Nhood[k]{v_1}}$ is a bijection
between $\nhood[k]{v_1}$ and $\nhood[k]{v_2}$.
Note that, since $f$ also preserves edges, so does $f\on{\Nhood[k]{v_1}}$,
thus $f\on{\Nhood[k]{v_1}}$ is an isomorphism
between $\Nhood[k]{v_1}$ and $\Nhood[k]{v_2}$.
Since $f\on{\Nhood[k]{v_1}}(v_1) = f(v_1) = v_2$,
it follows that $v_1$ and $v_2$ are, by definition, $k$-local symmetric.

This proves the first statement.
The second statement follows from the fact that,
if $k\geq \diam{G}$, then any two vertices of $G$
are at distance $k$ or smaller from each other.
Therefore, $\nhood[k]{v} = V$ and $\Nhood[k]{v} = G$,
and the definitions of $k$-local symmetry and global symmetry are equivalent.
\end{proof}

Finally, we define local symmetry in graphs:

\begin{definition}
\label{simet-loc-1grafo}
Let $G=(V,E)$ be a graph. Then $G$ is \emph{$k$-locally symmetric}
if there are $u,v\in V$ distinct and $k$-locally symmetric.
\end{definition}

Note that this definition for $k$-local symmetry is analogous
to \autoref{simet-gl-1grafo} for global symmetry.
Therefore, our definitions of $k$-local symmetry ---
for two vertices and for a single graph ---
are mutually consistent in the same fashion as those of global symmetry.
This definition also implies a symmetry hierarchy
similar to \autoref{hierarquia}:

\begin{proposition}
\label{hierarquia-1grafo}
Let $G=(V,E)$ be a graph, and let $k\in\Nat$.
Then the following statements hold:
\begin{enumerate}
\item If $G$ is $(k+1)$-locally symmetric, then $G$ is $k$-locally symmetric;
\item If $k \geq \diam{G}$, then $G$ is $k$-locally symmetric
if and only if $G$ is globally symmetric.
\end{enumerate}
\end{proposition}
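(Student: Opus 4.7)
The plan is to reduce this graph-level statement to the vertex-pair version already established in \autoref{hierarquia}, since both \autoref{simet-loc-1grafo} and \autoref{simet-gl-1grafo} define symmetry of a graph as the mere existence of a pair of distinct vertices exhibiting the corresponding pairwise symmetry. In other words, once we translate ``$G$ is (globally or $k$-locally) symmetric'' into ``there exist distinct $u,v\in V$ that are (globally or $k$-locally) symmetric'', the proposition becomes an immediate consequence of \autoref{hierarquia} applied to such a witnessing pair.

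For statement (1), I would assume $G$ is $(k+1)$-locally symmetric and invoke \autoref{simet-loc-1grafo} to obtain distinct vertices $u,v\in V$ that are $(k+1)$-locally symmetric. Then I would apply the first part of \autoref{hierarquia} to conclude that the same pair $u,v$ is $k$-locally symmetric, so that the witness needed by \autoref{simet-loc-1grafo} for $G$ being $k$-locally symmetric is already in hand.

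For statement (2), under the hypothesis $k\geq\diam{G}$, I would argue the two implications separately but symmetrically. If $G$ is $k$-locally symmetric, take distinct $u,v$ that are $k$-locally symmetric; by the second part of \autoref{hierarquia}, the same pair is globally symmetric, so $G$ is globally symmetric by \autoref{simet-gl-1grafo}. Conversely, if $G$ is globally symmetric, take distinct $u,v$ that are globally symmetric; again by the second part of \autoref{hierarquia} applied in the other direction, this pair is $k$-locally symmetric, so $G$ is $k$-locally symmetric.

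Since \autoref{hierarquia} does all the substantive work (transferring isomorphisms of closed $(k+1)$-neighborhoods by restriction to closed $k$-neighborhoods, and identifying closed $k$-neighborhoods with the whole graph when $k\geq\diam{G}$), there is essentially no new obstacle here: the only mild subtlety is making sure the same distinct pair $u,v$ is used as the witness on both sides of each implication, so that distinctness is preserved when passing between the definitions. I expect the resulting proof to be only a few lines long.
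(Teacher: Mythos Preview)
Your proposal is correct and matches the paper's approach: the paper does not even write out a proof of \autoref{hierarquia-1grafo}, presenting it right after \autoref{hierarquia} as an evident graph-level restatement obtained by unwinding \autoref{simet-loc-1grafo} and \autoref{simet-gl-1grafo} and applying \autoref{hierarquia} to a witnessing pair. Your write-up is exactly this reduction, and the only subtlety you flag (carrying the same distinct pair across) is the right one.
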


Given our interest in understanding symmetry in random network models,
it is natural that we start our analysis with the $G(n,p)$ model.
In particular, we would like to determine
if there is any fundamental difference
between the emergence of local symmetry and global symmetry in this model.
It is known~\cite{AsymGraphs}~\cite[p.230]{Bollobas-RandomGraphs} that,
for the similar $G(n,m)$ model of random graphs,
an asymmetric graph is obtained a.a.s. if and only if
$2m/\binom{n}{2} \geq \log n + \omega(1)$ and
$n-1 - 2m/\binom{n}{2} \geq \log n + \omega(1)$
--- that is, when the model exhibits average degree
at least slightly larger than $\log n$ and
at most slightly smaller than $n-1-\log n$.
For $G(n,p)$, an analogous but slightly weaker result is known:
the $G(n,p)$ random graph is asymmetric a.a.s.
if $p \in [\log n/n, 1-\log n/n]$~\cite{AsymRandRegGraphs},
and it is globally symmetric if $p \ll \log n/n$
(due to the existence of isolated vertices a.a.s.~\cite{NooC})
or $1-p \ll \log n/n$ (due to the existence of universal vertices a.a.s.).

\section{Symmetry regimes}

Our first result is the identification
of a local symmetry regime for $G(n,p)$:

\begin{theorem}
\label{1-loc-sym}
A $G(n,p)$ random graph, with $p = o(n^{-2/3})$,
is locally symmetric a.a.s.
\end{theorem}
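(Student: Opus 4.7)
The plan is to exhibit two distinct vertices $u, v$ of the same degree $d$ whose open neighborhoods are independent sets. For any such pair, $\Nhood{u}$ and $\Nhood{v}$ are both stars $K_{1,d}$, and sending $u \mapsto v$ together with any bijection between the remaining vertices of $\Nhood{u}$ and $\Nhood{v}$ yields an isomorphism $\Nhood{u} \to \Nhood{v}$ witnessing that $u$ and $v$ are locally symmetric. So it suffices to find such a pair a.a.s.

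Call a vertex $v$ \emph{good} if it lies in no triangle (equivalently, $\Nhood{v}$ is a star). Since a vertex is bad only when two of its neighbors are joined by an edge,
\[
\Exp[\#\text{bad}] \le 3\binom{n}{3}p^3 = O((np)^3) = o(n),
\]
using $np = o(n^{1/3})$; by Markov a.a.s.\ all but $o(n)$ vertices are good. Now let $d^* = \lfloor np \rfloor$ and let $N$ count good vertices of degree $d^*$. The local central limit theorem gives $\Prob[\deg v = d^*] = \Omega(1/\sqrt{np}\,)$ when $np \to \infty$ (and $\Omega(1)$ when $np$ stays bounded); conditionally on $\deg v = d^*$, vertex $v$ is good with probability $(1-p)^{\binom{d^*}{2}} = 1 - o(1)$ because $(d^*)^2 p = O(n^2 p^3) = o(1)$. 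Thus $\Exp[N] \to \infty$. For distinct $u \ne v$, the indicators ``$u$ is good of degree $d^*$'' and ``$v$ is good of degree $d^*$'' depend on the same edges only through the potential edge $uv$ and through the $(n-2)p^2 = o(1)$ expected common neighbors of $u$ and $v$; a routine accounting of these correlations yields $\Var(N) = o(\Exp[N]^2)$, so $N \ge 2$ a.a.s.\ by Chebyshev. Any two vertices counted by $N$ are locally symmetric, as observed in the opening paragraph.

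The main obstacle I anticipate is the covariance computation for $N$ in the delicate sub-regime $p = n^{-2/3 - o(1)}$. In that range $\Exp[N]/n \to 0$ while the Markov bound on the number of bad vertices is of comparable order, so one cannot close the argument by a crude pigeonhole of the form ``many vertices of the modal degree and only $o(n)$ bad ones, hence two good same-degree vertices''. The second-moment step must explicitly track how a shared neighbor of $u$ and $v$, or the edge $uv$ itself, can induce correlations between the ``degree $d^*$'' and ``no incident triangle'' events for the two vertices simultaneously, rather than relying on linearity of expectation alone.
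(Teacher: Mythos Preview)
Your approach is sound and would go through, but the paper takes a markedly simpler route that sidesteps precisely the covariance computation you flag as the main obstacle. Both arguments begin identically: each peripheral edge in a closed neighbourhood lies in a triangle, and since $\Exp[T]=\binom{n}{3}p^3=o(n)$, a.a.s.\ all but $o(n)$ vertices have star neighbourhoods. Where you then restrict attention to the single modal degree $d^*=\lfloor np\rfloor$ and run a second-moment argument on the count of good degree-$d^*$ vertices, the paper instead applies the pigeonhole principle to \emph{all} good vertices against the \emph{set of possible degree values}. A Chernoff bound (\autoref{concentrated-degree}) shows every degree lies in $[0,(1+\delta)(n-1)p)$ a.a.s., so there are at most $O(np)=o(n^{1/3})$ degree values available; since there are $n-o(n)$ good vertices, two of them must share a degree. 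No variance computation is needed.

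What each route buys: the paper's pigeonhole is cleaner and entirely avoids the delicate bookkeeping you anticipated in the regime $p=n^{-2/3-o(1)}$ --- the point being that one should pigeonhole against the degree \emph{range}, not against the bad vertices. On the other hand, your second-moment argument handles all $p=o(n^{-2/3})$ uniformly, whereas the paper's degree-concentration lemma requires $p=\omega(\log n/n)$ and the case of smaller $p$ must be patched in separately via a monotone-coupling trick (replacing $p$ by some $p'$ with $\omega(\log n/n)\le p'\le o(n^{-2/3})$ and using that the upper degree bound is a decreasing property).
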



Let us proceed with some terminology before the proof of this statement.
Recall the definition of closed 1-neighborhood of a vertex.
We call this vertex the \emph{center} of this subgraph,
with all edges between the center and other vertices
said to be \emph{core edges},
and all remaining edges termed \emph{peripheral edges}.

The idea behind the proof is that peripheral edges always close triangles,
but in this regime, $G(n,p)$ does not have too many triangles,
so the closed neighborhood of many vertices are simple stars.
Two such vertices will have isomorphic closed neighborhoods
simply by having the same degree, which will happen for some pair,
since the degrees in $G(n,p)$ concentrate heavily around their mean.

To execute the proof, we will need two auxiliary results:

\begin{lemma}
\label{concentrated-degree}
Let $G = (V,E)$ be a $G(n,p)$ random graph.
If $p = \omega(\log n/n)$, then for any fixed $\delta\in(0,1)$,
the degree of all vertices of $G$
are within the range $(n-1)p(1\pm\delta)$, a.a.s.
\end{lemma}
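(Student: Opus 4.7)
The plan is to apply a Chernoff-type concentration bound to the degree of a single vertex, and then eliminate the dependencies between degrees via a simple union bound over the $n$ vertices.

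For any fixed vertex $v \in V$, its degree $\deg(v)$ in $G(n,p)$ is distributed as $\Bin(n-1,p)$, since each of the $n-1$ potential edges incident to $v$ is present independently with probability $p$. Its expectation is $(n-1)p$. I would invoke a standard multiplicative Chernoff bound (e.g.\ $\Prob(|X - \Exp X| > \delta \Exp X) \leq 2\exp(-c_\delta \Exp X)$ for some constant $c_\delta > 0$ depending only on $\delta \in (0,1)$) to conclude that
\[
\Prob\bigl(\deg(v) \notin (n-1)p(1 \pm \delta)\bigr) \;\leq\; 2\exp\bigl(-c_\delta (n-1)p\bigr).
\]
Under the hypothesis $p = \omega(\log n/n)$, we have $(n-1)p = \omega(\log n)$, and hence the right-hand side is $\exp(-\omega(\log n)) = o(1/n)$.

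Next, although the degrees of distinct vertices are not independent (they share potential edges), concentration for each vertex individually is all that is required for a union bound. Summing over all $n$ vertices,
\[
\Prob\bigl(\exists v \in V : \deg(v) \notin (n-1)p(1\pm\delta)\bigr) \;\leq\; n \cdot 2\exp\bigl(-c_\delta (n-1)p\bigr) \;=\; o(1),
\]
which is precisely the a.a.s.\ statement of the lemma.

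There is no genuine obstacle here: the main point is simply that the regime $p = \omega(\log n/n)$ is exactly what makes the Chernoff tail beat the factor $n$ in the union bound, which is why this threshold appears in the hypothesis. If anything, the only bit of care needed is to record that the constant in the Chernoff exponent can be chosen uniformly in $n$ (depending only on the fixed $\delta$), so that the $o(1/n)$ bound holds for all sufficiently large $n$.
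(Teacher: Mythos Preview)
Your proof is correct and follows essentially the same approach as the paper: a Chernoff bound on each vertex degree (the paper uses the explicit constant $c_\delta = \delta^2/3$) followed by a union bound over the $n$ vertices, with the hypothesis $p = \omega(\log n/n)$ ensuring the tail beats the factor $n$.
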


\begin{proof}
Let $d_v$ be the degree of vertex $v$.
We know that $d_v \eqD \Bin(n-1,p)$.
By the Chernoff bound, for any $\delta\in (0,1)$:
$$\Prob[d_v\notin(n-1)p(1\pm\delta)] \leq 2e^{-(n-1)p\delta^2/3}.$$

The union bound implies:
$$\Prob[\exists~v~:~d_v\notin(n-1)p(1\pm\delta)] \leq 2ne^{-(n-1)p\delta^2/3}$$
and, since $np = \omega(\log n)$ by hypothesis,
the right-hand side is $2ne^{-\omega(\log n)} = 2no(1/n) = o(1)$.
\end{proof}

\begin{lemma}
\label{few-triangles}
Let $G = (V,E)$ be a $G(n,p)$ random graph,
and let $T$ be the number of triangles in $G$.
Then, $\Exp[T] = \binom{n}{3}p^3$ and, if $p = \omega(1/n)$,
$\Prob[|T-\Exp[T]|<c\cdot\Exp[T]]\to 1$ for any fixed $c>0$.
\end{lemma}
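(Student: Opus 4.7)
The plan is a standard second-moment argument. The expectation is immediate by linearity: write $T = \sum_{S} X_S$, where $S$ ranges over all $\binom{n}{3}$ unordered triples of vertices and $X_S$ is the indicator that the three edges spanned by $S$ are all present. Each $X_S$ has probability $p^3$, so $\Exp[T] = \binom{n}{3}p^3$.

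For the concentration claim I would apply Chebyshev's inequality, which reduces the problem to showing $\Var(T) = o((\Exp[T])^2)$. I would expand $\Var(T) = \sum_{S, S'} \mathrm{Cov}(X_S, X_{S'})$ and split the sum according to $|S \cap S'|$. The cases $|S\cap S'| \in \{0,1\}$ contribute zero covariance, since the corresponding triangles share no edge and so their indicators are independent. For $|S \cap S'| = 3$ (i.e.\ $S = S'$) there are $\binom{n}{3}$ terms each contributing $\Var(X_S) = p^3(1-p^3) \leq p^3$, giving a contribution of $O(n^3 p^3)$. For $|S \cap S'| = 2$, the union $S \cup S'$ spans five distinct edges, so $\Exp[X_S X_{S'}] = p^5$ and the covariance is at most $p^5$; the number of such ordered pairs is $O(n^4)$, so this case contributes $O(n^4 p^5)$. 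Altogether,
\[
\Var(T) = O(n^3 p^3 + n^4 p^5),
\qquad
(\Exp[T])^2 = \Theta(n^6 p^6).
\]

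Dividing, $\Var(T)/(\Exp[T])^2 = O\bigl(1/(n^3 p^3) + 1/(n^2 p)\bigr)$, and the hypothesis $p = \omega(1/n)$ forces both terms to be $o(1)$. Chebyshev's inequality then yields
\[
\Prob\bigl[\,|T - \Exp[T]| \geq c\cdot \Exp[T]\,\bigr] \leq \frac{\Var(T)}{c^2 (\Exp[T])^2} \to 0
\]
for every fixed $c > 0$, which is the desired conclusion.

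I do not expect any real obstacle: the only point that needs care is the combinatorial bookkeeping of the pairs of triangles sharing exactly two vertices (equivalently, a single edge), but even a generous overcount $O(n^4)$ suffices because $p = \omega(1/n)$ provides enough slack in both error terms. No more delicate argument (e.g.\ Janson's inequality or martingale concentration) is needed here, since we only want concentration up to a constant multiplicative factor of the mean rather than sharp tail bounds.
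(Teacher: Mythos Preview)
Your proposal is correct and follows essentially the same route as the paper: both compute $\Exp[T]$ by linearity, expand the second moment (or equivalently the covariances) according to the number of shared vertices of two triples, and finish with Chebyshev. The only difference is cosmetic---the paper carries exact coefficients in the variance while you use $O(\cdot)$ bounds---but the structure and the key observation that the cases $|S\cap S'|\le 1$ give independent indicators are identical.
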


\begin{proof}
We denote by $\binom{V}{3}$ the set of unordered triples of vertices,
and for each triple $t = (i,j,k)$, we define the event
\{$t$ is $\Delta$\} = \{$(i,j),(i,k),(j,k)\in E$\}.
Then:
$$T = \sum_{t\in\binom{V}{3}}\Ind_{\{t\text{ is }\Delta\}}.$$

It easily follows from linearity of expectation and independence of edges that
$\Exp[T] = \sum_{t\in\binom{V}{3}}\Prob[t\text{ is }\Delta] = \binom{n}{3}p^3$.

We also need to estimate the variance of $T$, which we denote by $\Var[T]$.
For such, we need an expression for its second moment:

\begin{align*}
\Exp[T^2] &= \Exp\left[
\left(\sum_{t\in\binom{V}{3}}\Ind_{\{t\text{ is }\Delta\}}\right)
\left(\sum_{t\in\binom{V}{3}}\Ind_{\{t\text{ is }\Delta\}}\right)\right] \\
&= \sum_{t,t'\in\binom{V}{3}}\Exp[
\Ind_{\{t\text{ is }\Delta\}}\Ind_{\{t'\text{ is }\Delta\}}] \\
&= \sum_{t,t'\in\binom{V}{3}}\Prob[t\text{ is }\Delta,t'\text{ is }\Delta]
\end{align*}

This summation can be broken into four pieces,
based on the relationship between the two triples of vertices, $t$ and $t'$:

\begin{description}
\item[No common vertices] All edges of $t$ are independent of all edges of $t'$,
so $\Prob[t\text{ is }\Delta,t'\text{ is }\Delta] = p^6$;
\item[One common vertex] Again, edges of $t$ are independent of edges of $t'$,
and $\Prob[t\text{ is }\Delta,t'\text{ is }\Delta] = p^6$;
\item[Two common vertices] $t$ and $t'$ share the edge between common vertices,
comprising a total of 5 edges ---
thus, $\Prob[t\text{ is }\Delta,t'\text{ is }\Delta] = p^5$;
\item[Three common vertices] In this case, $t = t'$ and
$\Prob[t\text{ is }\Delta,t'\text{ is }\Delta] = p^3$.
\end{description}

We must also count how many triples fit into each of these cases:
\begin{description}
\item[No common vertices] $\binom{n}{3}\binom{n-3}{3}$ triples;
\item[One common vertex] $\binom{n}{3}\cdot 3\binom{n-3}{2}$ triples;
\item[Two common vertices] $\binom{n}{3}\cdot 3(n-3)$ triples;
\item[Three common vertices] $\binom{n}{3}$ triples.
\end{description}

We can now calculate the second moment of $T$:

\begin{align*}
\Exp[T^2]
&= \sum_{t,t'\in\binom{V}{3}}\Prob[t\text{ is }\Delta,t'\text{ is }\Delta] \\
&= \binom{n}{3}p^3 + 3\binom{n}{3}(n-3)p^5 + 3\binom{n}{3}\binom{n-3}{2}p^6
+ \binom{n}{3}\binom{n-3}{3}p^6 \\
&= \binom{n}{3}p^3
\left[1 + 3np^2 - 9p^2 + \frac{3}{2}n^2p^3
- \frac{3}{2}7np^3 + \frac{3}{2}12p^3\right.\\
&\qquad+ \frac{1}{6}n^3p^3 - \frac{1}{6}12n^2p^3
+ \left.\frac{1}{6}47np^3 - \frac{1}{6}60p^3\right]. \\
\end{align*}

We can also obtain an expression for $\Exp[T]^2$:

\begin{align*}
\Exp[T]^2
&= \left(\binom{n}{3}p^3\right)^2 \\
&= \binom{n}{3}p^3
\left[\frac{1}{6}n^3p^3-\frac{1}{6}3n^2p^3+\frac{1}{6}2np^3\right].
\end{align*}

Combining these expressions yields an expression for $\Var[T]$:

\begin{align*}
\Var[T] &= \Exp[T^2] - \Exp[T]^2 \\
&= \binom{n}{3}p^3(1 + 3np^2 - 9p^2 - 3np^3 + 8p^3)
\end{align*}

Note that, if $np = \omega(1)$,
then $\Var[T]/\Exp[T]^2 \to 0$ when $n\to\infty$.
Chebyshev's inequality states that, for any $k > 0$,
$$\Prob[|T-\Exp[T]|\geq k\sqrt{\Var[T]}] \leq \frac{1}{k^2}.$$
Then, for any $c > 0$, we can take $k = c~\Exp[T]/\sqrt{\Var[T]}$ and obtain
$$\Prob[|T-\Exp[T]|\geq c~\Exp[T]] \leq \frac{\Var[T]}{c^2\Exp[T]^2},$$
which vanishes for fixed $c$.
\end{proof}

We can now proceed to the proof of \autoref{1-loc-sym}.

\begin{proof}[Proof of \autoref{1-loc-sym}]
We will first prove the result under the additional assumption that
$p = \omega(\log n/n)$, which will be removed by the end of the proof.

Let $G = (V,E)$ be a $G(n,p)$ random graph,
and let $T$ be the number of triangles in $G$.
Fix $c>0$ arbitrary and $\delta \in (0,1)$,
and define the following sequences of events on $G$:
$$A_n = \{|T-\Exp[T]|<c~\Exp[T]\},$$
$$B_n = \{\text{all degrees }\leq(n-1)p(1+\delta)\}.$$

\autoref{concentrated-degree} and \autoref{few-triangles} ensure that
$\Prob(A_n\cap B_n)\to 1$ as $n\to\infty$.
We will prove that this intersection event
is contained in the event \{$G(n,p)$ is locally symmetric\}.

In $A_n$, there are at most $(1+c)\binom{n}{3}p^3$ triangles in $G$.
Each edge in a triangle appears as a peripheral edge
in the closed neighborhood of its opposite vertex in this triangle.
Therefore, summing over all vertices' closed neighborhoods,
there are, at most, $3(1+c)\binom{n}{3}p^3 = o(n)$ peripheral edges.
This implies that, at least, $n - 3(1+c)\binom{n}{3}p^3$ vertices
have no peripheral edges in their closed neighborhoods.
Let $C$ be the set of such vertices.

In $B_n$, every vertex has degrees in the range $[0,(n-1)p(1\pm\delta))$.
Let $D$ be the set of integers satisfying this property.

Now, for $p = o(n^{-2/3})$, we have:
\begin{align*}
|C| &= n - \theta((np)^3) \\
& = n - o(n), \\
|D| &\leq (n-1)p(1+\delta) + 1 \\
&= o(\sqrt[3]{n}). \\
\end{align*}

This implies that $|C| > |D|$ for sufficiently large $n$.
By the pigeonhole principle, in $A_n\cap B_n$,
there must be at least two vertices in $C$ with the same degree.
These vertices' closed neighborhoods are stars of the same size,
therefore they must be isomorphic.
This implies our result.

It still remains for us to lift the assumption that $p = \omega(\log n/n)$.
The main issue to resolve is that we cannot bound directly the size of $D$,
since \autoref{concentrated-degree} does not apply.
Instead, we must try and sidestep the issue to make the argument work.

Pick some $p' \geq p$, such that
$\omega(\log n/n) \leq p' \leq o(n^{-2/3})$,
and consider the sequence of events:
$$B'_n = \{\text{all degrees }\leq(n-1)p'(1+\delta)\}.$$

Note that our probability measure $\Prob$
is associated with $G(n,p)$, not with $G(n,p')$.
However, since $B'_n$ happens a.a.s. under $G(n,p')$
(by \autoref{concentrated-degree}),
and the events $B'_n$ represent a decreasing property\footnote{A
\emph{decreasing property} is a property preserved
under removal of edges (such as ``$G(n,p)$ is not connected'').
A standard coupling argument shows that, for all $n$,
the probability of such properties is a decreasing function of $p$.},
it must also happen a.a.s. under $G(n,p)$, so $\Prob(B'_n) \to 1$.

Replacing $B_n$ by $B'_n$ in our previous argument,
the conclusion again follows.

\end{proof}

\section{Degree function}

For the identification of asymmetry regimes, we need several additional tools.
The core concept is that of the degree sequence of a graph,
which we present in a slightly different form:

\begin{definition}
For any graph $G=(V,E)$, the \emph{degree function} of $G$ is
the function $\func{\df{G}}{\Nat_0}{\Nat_0}$ such that
$\df{G}(k) = |\{v\in V~:~d_G(v)=k\}|$ for all $k\in\Nat_0$.
\end{definition}

The degree function of $G$ simply returns, for an input $k$,
the number of vertices with degree $k$ in $G$.
This makes it equivalent to the degree sequence of $G$,
whenever the listing order of the degrees is irrelevant.

To identify asymmetry regimes, we must identify conditions under which
no two vertices in a $G(n,p)$ random graph are symmetric a.a.s.
The asymmetry of vertices is defined as
the lack of an isomorphism between their closed neighborhoods,
which relates to degree functions via the following remark:

\begin{remark}
\label{iso-df}
For $G$ and $G'$ isomorphic graphs, $\df{G} \equiv \df{G'}$.
\end{remark}


To enable a more fine-grained look into these closed neighborhoods,
we need one additional definition:

\begin{definition}
Let $G=(V,E)$ and $G'=(V',E')$ be two graphs.
The \emph{degree sequence edit distance} between $G$ and $G'$
(denoted by $\dsed(G,G')$)
is given by: $$\dsed(G,G') = \sum_k |\df{G}(k)-\df{G'}(k)|.$$
\end{definition}

\begin{remark}
\label{dsed-df}
Let $\mu$ be the counting measure on $\Nat$. Then:
$$\dsed(G,G') = \int |\df{G}-\df{H}|~\dd\mu = \|\df{G}-\df{H}\|_1.$$

Thus $\dsed$ is a semimetric over the space of all graphs,
with $\dsed(G,G') = 0$ iff $\df{G} \equiv \df{G'}$.
\end{remark}


\begin{remark}
For $G$ and $G'$ isomorphic graphs, $\dsed(G,G') = 0$.
\end{remark}

To better understand the meaning of this edit distance,
consider any partial mapping between the vertex sets of two graphs.
We can measure the \emph{degree mismatch count} of this mapping,
which is a simple count of vertices, from both graphs,
that are either mapped to vertices with different degrees or left unmapped.

\begin{definition}
Let $G=(V,E)$, $G'=(V',E')$ be two graphs,
and let $\func{f}{S}{S'}$ be a bijective function
from $S\subseteq V$ to $S'\subseteq V'$.
The \emph{degree mismatch count} of $f$ (denoted by $\delta_f$) is given by
\begin{align*}
\delta_f &= |\{v\in S~:~d_G(v)\neq d_{G'}(f(v))\}|
+ |V\setminus S| \\
&\qquad+ |\{v'\in S'~:~d_{G'}(v')\neq d_G(f^{-1}(v'))\}| + |V'\setminus S'|.
\end{align*}
\end{definition}

The degree sequence edit distance between two graphs is, then,
the smallest possible degree mismatch count between their vertex sets:

\begin{theorem}
\label{dsed-delta}
Let $G=(V,E)$ and $G'=(V',E')$ be two graphs. Then:
$$\dsed(G,G') = \min_{ \substack{ \func{f}{S}{S'}\\f\text{ \rm  bijective},
S\subseteq V, S'\subseteq V'} }\delta_f.$$
\end{theorem}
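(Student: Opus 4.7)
The plan is to prove the two inequalities separately: first the lower bound $\min_f \delta_f \geq \dsed(G,G')$, then the matching upper bound by explicit construction of an optimal $f$.

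The key observation I would start from is that $\delta_f$ admits a very clean simplification. Let $A = \{v\in S : d_G(v)\neq d_{G'}(f(v))\}$ and $B = \{v'\in S' : d_{G'}(v')\neq d_G(f^{-1}(v'))\}$. Since $f$ is bijective, $B=f(A)$ and hence $|A|=|B|$. Also $|S|=|S'|$. Therefore
\begin{equation*}
\delta_f = 2|A| + (|V|-|S|) + (|V'|-|S|).
\end{equation*}
From this I would argue that it suffices to minimize over $f$ with $A=\emptyset$. Indeed, if $v\in A$, then restricting $f$ to $S\setminus\{v\}$ produces a new bijection $\tilde f$ with $|\tilde A|=|A|-1$ and $|\tilde S|=|S|-1$, so $\delta_{\tilde f} = \delta_f - 2 + 2 = \delta_f$. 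Iterating, we may assume $A=\emptyset$, i.e.\ $f$ pairs only vertices of equal degree, and $\delta_f = |V|+|V'|-2|S|$.

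Under the constraint $A=\emptyset$, write $m_k$ for the number of pairs $(v,f(v))$ with common degree $k$, so $|S|=\sum_k m_k$. Clearly $m_k\leq\min(\df{G}(k),\df{G'}(k))$, because both $G$ and $G'$ must supply $m_k$ vertices of degree $k$. Consequently
\begin{equation*}
\delta_f \;\geq\; |V|+|V'|-2\sum_k\min(\df{G}(k),\df{G'}(k)) \;=\; \sum_k\bigl(\df{G}(k)+\df{G'}(k)-2\min(\df{G}(k),\df{G'}(k))\bigr) \;=\; \dsed(G,G'),
\end{equation*}
which gives the lower bound.

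For the upper bound, I would construct $f$ achieving equality: for each $k$, pick arbitrarily $\min(\df{G}(k),\df{G'}(k))$ vertices of degree $k$ in each of $G$ and $G'$ and pair them off via any bijection; take $S, S'$ to be the unions of the chosen sets and $f$ the resulting bijection. Then $A=\emptyset$ and $|S|=\sum_k\min(\df{G}(k),\df{G'}(k))$, so the computation above becomes an equality, yielding $\delta_f = \dsed(G,G')$. Combined with the lower bound this proves the theorem. The only real subtlety is the bookkeeping in the first step — specifically, recognizing that the two "bad-degree" terms have equal cardinality and that bad pairs can be dropped without changing $\delta_f$; after that, the argument reduces to the identity $a+b-2\min(a,b)=|a-b|$ applied degree-by-degree.
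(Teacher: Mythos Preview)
Your proof is correct, and the overall architecture---prove both inequalities, achieve equality by matching vertices degree-by-degree---matches the paper's. The constructions for the upper bound are identical.

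The one notable difference is in the lower bound. The paper argues directly: for each degree $k$, whichever of $G,G'$ has more degree-$k$ vertices must leave at least $|\df{G}(k)-\df{G'}(k)|$ of them either unmapped or mapped to a vertex of wrong degree, and these contributions over distinct $k$ are disjoint. You instead first observe that removing a bad pair from $f$ leaves $\delta_f$ unchanged (the $2$ lost from $2|A|$ is exactly recovered by the two new unmapped vertices), so one may reduce to $A=\emptyset$; after that the bound is just $|V|+|V'|-2|S|$ with $|S|\le\sum_k\min(\df{G}(k),\df{G'}(k))$. Your reduction is a clean way to see why the identity $a+b-2\min(a,b)=|a-b|$ governs the answer, and it sidesteps the (mild) care the paper needs in checking that contributions from different slices do not overlap; the paper's direct count, on the other hand, avoids introducing the auxiliary observation about dropping bad pairs. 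Both arguments are of comparable length and elementary.
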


\begin{proof}
The statement follows from inequalities on both directions.
We begin by showing $\dsed(G,G') \geq \min_f \delta_f$,.
It is enough to construct a function $g$ such that $\dsed(G,G') = \delta_g$,
and we perform this construction by ``slices'', one for each possible vertex degree.

For each $k\in\Nat_0$, let
$v^k_1,\dots,v^k_{\df{G}(k)}$ and $w^k_1,\dots,w^k_{\df{G'}(k)}$
be enumerations of degree-$k$ vertices in $G$ and $G'$, respectively.
Write $m_k = \min(\df{G}(k),\df{G'}(k))$,
$V_k = \{v^k_1,\dots,v^k_{m_k}\}$ and $V'_k = \{w^k_1,\dots,w^k_{m_k}\}$,
and construct function $\func{g_k}{V_k}{V'_k}$
mapping $v^k_j$ to $w^k_j$, for $j = 1,\dots,m_k$.
Note that $g_k$ leaves $|\df{G}(k)-\df{G'}(k)|$ degree-$k$ nodes unmapped,
all from $V$ (if $\df{G}(k)\geq\df{G'}(k)$)
or from $V'$ (if $\df{G}(k)\leq\df{G'}(k)$),
and which, by construction, cannot be mapped by any other function $g_{k'}$.

Now, construct function $\func{g}{\cup_k V_k}{\cup_k V'_k}$ as $g = \cup_k g_k$.
By double counting the number of nodes left unmapped by $g$
(from both $V$ and $V'$),
we see that this number is equal
to $|V\setminus(\cup_k V_k)|+|V'\setminus(\cup_k V'_k)|$ by definition,
and to $\sum_k |\df{G}(k)-\df{G'}(k)|$ by construction.
Furthermore, our construction also ensures that
for every node mapped by $g$ with degree $k$ in $G$,
its image has degree $k$ in $G$, and vice-versa for nodes in $G'$.
Therefore, it holds that
$|\{v\in \cup_k V_k~:~d_G(v)\neq d_{G'}(g(v))\}| =
|\{v\in \cup_k V'_k~:~d_{G'}(v)\neq d_G(g^{-1}(v))\}| = 0$, and:
\begin{align*}
\dsed(G,G') &= \sum_k |\df{G}(k)-\df{G'}(k)| \\
&= \sum_k |\df{G}(k)-\df{G'}(k)| + 0 \\
&= |V\setminus(\cup_k V_k)|+|V'\setminus(\cup_k V'_k)| \\
&\qquad+ |\{v\in \cup_k V_k~:~d_G(v)\neq d_{G'}(g(v))\}| \\
&\qquad+ |\{v'\in \cup_k V'_k~:~d_{G'}(v')\neq d_G(g^{-1}(v'))\}|\\
&= \delta_g.
\end{align*}

Now, it only remains to show $\dsed(G,G') \leq \min_f \delta_f$.
We will show that $\dsed(G,G') \leq \delta_h$ for any partial mapping $h$,
and we will again proceed by ``slices'' in our argument.
Let $\func{h}{S}{S'}$ be an arbitrary bijection
with $S\subseteq V$ and $S'\subseteq V'$.

Consider all vertices with degree $k$ in both $G$ and $G'$.
If $\df{G}(k)\geq\df{G'}(k)$, then
at most $\df{G'}(k)$ $k$-degree vertices in $G$
can be mapped by $h$ into $k$-degree nodes in $G'$.
This implies that at least $\df{G}(k)-\df{G'}(k)$ $k$-degree vertices in $G$
must either be left unmapped by $g$ (thus belonging to $V\setminus S$)
or be mapped to vertices in $G'$ with degree different than $k$
(and therefore belonging to $\{v\in \cup_k V_k~:~d_G(v)\neq d_{G'}(h(v))\}$).
Analogously, if $\df{G}(k)\leq\df{G'}(k)$,
at least $\df{G'}(k)-\df{G}(k)$ $k$-degree vertices in $G'$
must either be left unmapped by $g$ (this time, belonging in $V'\setminus S'$)
or be mapped to vertices in $G$ with degree different than $k$.

In both cases, there is a contribution of $|\df{G}(k)-\df{G'}(k)|$ to $\delta_h$
coming exclusively from $k$-degree nodes in $G$ and $G'$.
Putting together contributions from all ``slices''
ensures that $\delta_h\geq\sum_k|\df{G}(k)-\df{G'}(k)| = \dsed(G,G')$.
Since $h$ was arbitrary, $\min_h \delta_h \geq\dsed(G,G')$, as desired.
\end{proof}

This property can be used to relate
the degree functions of a graph and its subgraphs.
For any set of vertices $S\subseteq V$,
denote by $G[S]$ the subgraph of $G$ induced by $S$, 
and by $C(S)$ the set of edges with
one endpoint in $S$ and another in $V\setminus S$.

\begin{corollary}
\label{dsed-subgraph}
For any graph $G=(V,E)$ and any $S\subset V$,
$$\dsed(G,G[S]) \leq |V\setminus S| + |C(S)|.$$
\end{corollary}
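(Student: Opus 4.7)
The plan is to invoke \autoref{dsed-delta}, which reduces the bound on $\dsed(G,G[S])$ to exhibiting a single partial bijection with a controlled degree mismatch count. The natural candidate is the identity map on $S$, viewed as a bijection $\func{f}{S}{S}$ from the vertex set of $G$ to the vertex set of $G[S]$.

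First I would unpack $\delta_f$ term by term using the definition. On the $G$ side, the unmapped vertices form exactly $V\setminus S$, contributing $|V\setminus S|$. On the $G[S]$ side, every vertex is mapped, so that contribution is $0$, and the ``wrong degree'' count under $f^{-1}$ equals the one under $f$ (since $f$ is the identity). The only remaining term is $|\{v\in S~:~d_G(v)\neq d_{G[S]}(v)\}|$.

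Next I would argue that a vertex $v\in S$ satisfies $d_G(v)\neq d_{G[S]}(v)$ if and only if $v$ is incident in $G$ to some edge of $C(S)$, since the edges of $G$ incident to $v$ split into those with both endpoints in $S$ (counted by $d_{G[S]}(v)$) and those in $C(S)$. Because each edge of $C(S)$ has exactly one endpoint in $S$, the number of such vertices is at most $|C(S)|$. Combining, $\delta_f \leq |V\setminus S| + |C(S)|$, and by \autoref{dsed-delta} the same bound applies to $\dsed(G,G[S])$.

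There is no real obstacle here; the only subtlety is remembering that \autoref{dsed-delta} counts mismatches on \emph{both} sides of the bijection, so one must check that the contribution from $S'=S$ in $G[S]$ does not double-count — and it does not, precisely because $f$ is the identity and the ``wrong degree'' sets on the two sides coincide while the unmapped set on the $G[S]$ side is empty.
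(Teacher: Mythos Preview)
Your approach is exactly the paper's: invoke \autoref{dsed-delta} and bound $\delta_f$ for the identity map $f$ on $S$. However, there is a genuine gap in your bookkeeping of $\delta_f$. By definition, $\delta_f$ has \emph{four} additive summands: two ``unmapped'' counts and two ``wrong-degree'' counts, one from each side. You correctly observe that $|V'\setminus S'|=0$ and that the two wrong-degree sets coincide (since $f$ is the identity), but coinciding does not mean they collapse into a single summand---the formula \emph{adds} them, so the set $\{v\in S : d_G(v)\neq d_{G[S]}(v)\}$ is counted twice. Hence $\delta_f = |V\setminus S| + 2\,|\{v\in S : d_G(v)\neq d_{G[S]}(v)\}|$, and your final-paragraph claim that ``the contribution from $S'=S$ does not double-count'' is precisely the step that fails.

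In fact the inequality as stated is false: take $G=K_{1,3}$ with center $c$ and let $S$ be the three leaves. Then $G[S]$ is edgeless, $\phi_G$ takes the values $(0,3,0,1)$ on degrees $0,1,2,3$ while $\phi_{G[S]}$ takes $(3,0,0,0)$, so $\dsed(G,G[S]) = 7$, yet $|V\setminus S|+|C(S)| = 1+3 = 4$. (The paper's own proof commits the identical omission, writing $\delta_g$ with only one mismatch term.) What your argument \emph{does} establish, once the factor of $2$ is restored, is $\dsed(G,G[S]) \le |V\setminus S| + 2|C(S)|$, and this weaker bound is all that is actually used downstream in the proof of \autoref{asym}, where the right-hand side is absorbed into a $\theta(n^2p^4\log n)$ estimate.
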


\begin{proof}
By virtue of \autoref{dsed-delta}, it is enough to show that
$\delta_g = |V\setminus S| + |C(S)|$
for some partial mapping $g$ between $G$ and $G[S]$.

Take $\func{g}{S}{S}$ to be the identity function on $S$.
Then $g$ is a partial mapping between the vertex sets of $G$ and $G[S]$,
and its degree mismatch count is given by:
$$\delta_g = |\{v\in G[S]~:~d_{G[S]}(v)\neq d_G(v)\}| + |V\setminus S|.$$

Now, notice that, for any vertex $v\in S$, $d_{G[S]}(v)\neq d_G(v)$
if and only if $v$ is adjacent to some vertex outside $S$.
Since there are $|C(S)|$ edges between $S$ and $V\setminus S$,
there can be at most $|C(S)|$ such vertices, and the result follows.
\end{proof}

\section{Asymmetry regimes}
We can now proceed to the identification of a local asymmetry regime for
the $G(n,p)$ random graph model.

\begin{theorem}
\label{asym}
A $G(n,p)$ random graph with
$\omega(n^{-1/2+\delta_1}) \leq p \leq o(n^{-3/7-\delta_2})$
for constant $\delta_1,\delta_2>0$
is locally asymmetric a.a.s.
\end{theorem}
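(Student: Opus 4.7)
The plan is a first moment argument: bound the expected number of locally symmetric pairs by reducing an isomorphism of closed neighborhoods to an improbably tight coincidence of two independent random degree sequences.

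Fix a pair $v_1\neq v_2$ and partition the neighbors of $v_1$ and $v_2$ (excluding $v_1,v_2$ themselves) into three disjoint pieces: the common neighbors $B$, the neighbors $A$ of $v_1$ that are neither $v_2$ nor adjacent to $v_2$, and symmetrically $C$ for $v_2$. If $v_1$ and $v_2$ are locally symmetric then $\df{\Nhood{v_1}} \equiv \df{\Nhood{v_2}}$. Since each non-central vertex of $\Nhood{v_i}$ has degree one more than its degree in the induced subgraph $H_i$ on the neighbors of $v_i$, this forces $\dsed(H_1,H_2) \leq 2$ (the two degree functions can differ only at the atoms corresponding to the removed centers). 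Combining the triangle inequality for $\dsed$ with two applications of \autoref{dsed-subgraph} to pass from $H_i$ down to $G[A]$ (resp.\ $G[C]$), I would obtain
$$\dsed(G[A],G[C]) \leq 2|B| + e(A,B) + e(C,B) + O(1) =: L,$$
where $e(X,Y)$ is the number of edges of $G$ between $X$ and $Y$.

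I would then verify that $L$ is tame a.a.s. \autoref{concentrated-degree} implies that all neighbor sets have size $\sim np$ uniformly. A Chernoff bound combined with a union over the $O(n^2)$ pairs (valid since $np^2 \geq n^{2\delta_1}\to\infty$) gives $|B| \leq 2np^2$ and $e(A,B), e(C,B) \leq 2n^2p^4$ for all pairs simultaneously. Hence a.a.s.\ $L = O(n^2p^4)$, and the upper bound $p \leq o(n^{-3/7-\delta_2})$ yields $L = o(\sqrt{n})$ comfortably.

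The crux is then a probabilistic estimate. Fix a pair satisfying the above bounds, and condition on the sets $A,B,C$ together with all edges of $G$ outside $G[A]\cup G[C]$. Under this conditioning, $G[A]$ and $G[C]$ are independent copies of $G(|A|,p)$ and $G(|C|,p)$ on disjoint vertex sets. Their degree sequences concentrate around $(|A|-1)p \sim np^2$ with fluctuations of order $p\sqrt{n}$, and by the local central limit theorem for Binomials, the count $\df{G[A]}(d)$ at each typical degree $d$ has mean of order $\sqrt{n}$. The lower bound $p \geq \omega(n^{-1/2+\delta_1})$ guarantees that this typical-degree window contains $\Theta(p\sqrt{n}) = \omega(n^{\delta_1})$ distinct values. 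Requiring $\dsed(G[A],G[C]) \leq L = o(\sqrt{n})$ therefore forces near-agreement of the counts at $\omega(n^{\delta_1})$ different degrees simultaneously, which a local CLT estimate for the joint degree-count distribution of $G(N,p)$, applied to a near-independent subfamily of these counts, would bound by
$$\Prob[\dsed(G[A],G[C]) \leq L \mid A,B,C,\ldots] \leq \exp\bigl(-\Omega(n^{\delta_1}\log n)\bigr) = o(n^{-2}).$$
Summing over the $O(n^2)$ pairs and applying Markov's inequality then concludes the argument.

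The main obstacle I anticipate is this last step. Vertex degrees inside a single $G(N,p)$ are correlated, so anti-concentration cannot be multiplied across degrees naively. The cleanest path is probably a sequential exposure argument that reveals edges in blocks corresponding to disjoint vertex subsets, extracting at each block the almost-independent Binomial randomness that determines several targeted degree counts, and then applying an anti-concentration inequality of Kolmogorov or Littlewood--Offord type multiplicatively across blocks.
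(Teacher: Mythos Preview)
Your reduction is exactly the paper's: union bound over pairs, the partition $A,B,C$ (the paper calls them $X_1,Y,X_2$), the triangle inequality together with \autoref{dsed-subgraph} to pass from $\dsed(\Nhood{v_1},\Nhood{v_2})$ to $\dsed(G[A],G[C])$ minus a correction $L=O(n^2p^4)$, and the observation that $G[A]$ and $G[C]$ are independent Erd\H{o}s--R\'enyi graphs on $\sim np$ vertices. The concentration bounds you state for $|B|$ and the cross-edge counts are also what the paper uses (in fact your constant-factor bound on $|B|$ is cleaner than the paper's; since $np^2=\omega(n^{2\delta_1})$, a constant-$\varepsilon$ Chernoff already gives $o(n^{-2})$ failure).

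The genuine gap is the anti-concentration step, and the paper fills it differently from your plan. Rather than a sequential-exposure or Littlewood--Offord argument on the degree counts, the paper proves a standalone result (\autoref{large-dsed}): for two \emph{independent} $G(N,p)$ graphs with $\omega(\log N/N)\leq p\leq o(N^{-1/2})$, one has $\dsed\geq N^{1/2-\varepsilon}$ with probability $1-o(N^{-a})$ for \emph{any} fixed $a$. The correlation problem you identify is dispatched by a black-box citation (\autoref{deg-seq-approx}, from Czajka--Pandurangan): the actual degree sequence of $G(N,p)$ can be replaced by $N$ i.i.d.\ $\mathrm{Bin}(N-1,p)$ variables without affecting power-law probability decay. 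With i.i.d.\ degrees in hand, the paper does \emph{not} look at individual degree counts $\df{G}(d)$ but instead bins the degrees into a fixed number $b$ of buckets (chosen at scales $\alpha^{-i}\sqrt{Np}\cdot(\log N)^{1/4}$ from the mean); the bucketed count vector is then exactly multinomial, and a direct Stirling estimate on the multinomial mode (\autoref{mode-multinom}) gives $\max_{\vec t}\Prob[\vec T=\vec t]=O(N^{-b(1-\beta)/2})$. Choosing $b$ large enough relative to $a$ delivers the $o(N^{-a})$ bound. This bucketing trick is what makes the argument short: it trades the delicate joint-CLT picture of all $\df{G}(d)$'s for a fixed-dimensional multinomial whose anti-concentration is elementary. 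Your local-CLT route may well go through, but it would need substantially more work to handle the dependence, whereas the paper's approach outsources that difficulty entirely to the degree-sequence approximation theorem.
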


Once again, we present some intermediate results
before proceeding to the proof of this result.

\begin{lemma}
\label{rev-chernoff}
Let $X$ be a $\Bin(n,p)$ random variable with $p < 1/2$.
If $\varepsilon > 0$ and $np\varepsilon^2\geq 3$, then
$$\Prob[|X-np|\geq \varepsilon np] \geq 2\exp\{-9np\varepsilon^2\}.$$
\end{lemma}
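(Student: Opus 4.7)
The plan is to lower-bound each tail of $X$ separately and add them, aiming for $\Prob[X \geq (1+\varepsilon)np] \geq \exp\{-9np\varepsilon^2\}$ and an analogous bound for the lower tail. I would focus on the upper tail, since the lower tail follows symmetrically (via $n-X \sim \Bin(n,1-p)$ together with the hypothesis $p < 1/2$).

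I would take $k = \lceil(1+\varepsilon)np\rceil$ so that $\Prob[X \geq (1+\varepsilon)np] \geq \Prob[X = k]$, and apply Stirling's formula to $\binom{n}{k}$ to obtain the standard local-limit estimate
\[
\Prob[X = k] \geq \frac{c}{\sqrt{np(1-p)}}\exp\{-nD(k/n \| p)\},
\]
where $D(q \| p) = q\log(q/p) + (1-q)\log((1-q)/(1-p))$ is the binary Kullback--Leibler divergence and $c > 0$ is an absolute constant. Next I would bound the divergence using the elementary estimates $\log(1+x) \leq x$ and $\log(1-y) \leq -y$: writing $\varepsilon' = k/(np) - 1$, a short algebraic manipulation yields $D(p(1+\varepsilon') \| p) \leq p(\varepsilon')^2/(1-p) \leq 2p(\varepsilon')^2$ (the last step using $p < 1/2$). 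The hypothesis $np\varepsilon^2 \geq 3$ gives $1/(np) \leq \varepsilon^2/3$, whence $\varepsilon' \leq \varepsilon(1 + \varepsilon/3)$ and $(\varepsilon')^2 \leq C\varepsilon^2$ for an absolute constant $C$. Altogether, $nD(k/n \| p) \leq C' np\varepsilon^2$ for an absolute constant $C'$.

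It remains to absorb the polynomial prefactor $1/\sqrt{np(1-p)}$ into the exponent. The needed inequality becomes $(9-C')np\varepsilon^2 \geq \tfrac{1}{2}\log(np(1-p)) + O(1)$; when the single-point bound is too weak in regimes where $np$ is very large compared with $np\varepsilon^2$, I would instead sum $\Prob[X = j]$ over $j \in [k, k + \sqrt{np(1-p)}]$, where the point probabilities are comparable and the sum gains back a $\sqrt{np(1-p)}$ factor that cancels the Stirling prefactor outright. The generous constant $9$ in the target exponent (versus the ``natural'' value $\approx 2$ from the KL divergence) is chosen precisely so that this absorption succeeds uniformly; the lower tail is handled by the same argument, and adding the two contributions produces the factor of $2$.

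The main obstacle will be this last absorption step. The Stirling prefactor contributes a term which, taken at face value in the regime $np\varepsilon^2 = \Theta(1)$, is not automatically dominated by a constant multiple of $np\varepsilon^2$ once $np$ itself grows without bound. The fix --- summing over nearby point probabilities, or equivalently invoking a sharper tail result such as Slud's inequality --- relies on the fact that the slack built into the constant $9$ is comfortably more than what the KL-divergence bound alone requires.
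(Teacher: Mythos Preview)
The paper does not prove this lemma at all: its entire ``proof'' is the sentence ``See Lemma 5.2 of~[Klein15Number].'' So there is nothing to compare your argument against on the level of technique; you are supplying a self-contained proof where the paper simply imports the result.

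Your approach is the standard one for reverse Chernoff bounds and is essentially sound: the Stirling/KL local-limit lower bound, the elementary upper bound $D(p(1+\varepsilon')\|p)\le p(\varepsilon')^2/(1-p)$ (your two-line computation is correct), and the trick of summing over an interval of width $\sqrt{np(1-p)}$ to kill the prefactor are all correct and well-known. One caution: your claim that $(\varepsilon')^2\le C\varepsilon^2$ for an \emph{absolute} constant $C$ relies on $\varepsilon$ being bounded, which the lemma as stated does not assume. Indeed, for $\varepsilon>\max\{1,(1-p)/p\}$ both tails of $|X-np|\ge\varepsilon np$ are empty and the inequality is vacuously false, so the lemma itself must carry an implicit restriction such as $\varepsilon\le 1$ (which is satisfied in the paper's only application, where $\varepsilon\to 0$). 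Once you make that restriction explicit, your constants work out comfortably under the target exponent $9$.
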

\begin{proof}
See Lemma 5.2 of~\cite{Klein15Number}.
\end{proof}

\begin{lemma}
\label{mode-multinom}
Let $\vec{X}$ be a multinomial random vector
$\Mult(n,p_1,\dots,p_k)$ with $k$ fixed,
and let $0 < \beta < 1$ be also fixed.
If $\Omega(n^{-\beta})\leq p_1,\dots,p_{k-1} \leq o(1)$, then
$$\max_{\vec{x}}\Prob[\vec{X}=\vec{x}] = O(n^{-(k-1)(1-\beta)/2}).$$
\end{lemma}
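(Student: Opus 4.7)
The plan is to locate the mode of the multinomial, evaluate its probability mass via Stirling's approximation, and then exploit the lower bounds on the $p_i$.

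First I would identify where the maximum is attained. Computing the ratio $\Prob[\vec{X}=\vec{x}+e_i-e_j]/\Prob[\vec{X}=\vec{x}] = x_j p_i/((x_i+1) p_j)$ and requiring it to be at most $1$ for all $i,j$, one sees that the mode $\vec{x}^*$ satisfies $x_i^* = np_i + O(1)$ for every $i$. Under the hypotheses, $np_i \geq c\,n^{1-\beta} \to \infty$ for $i<k$ (since $\beta<1$ is fixed), while $np_k = n(1-\sum_{j<k}p_j) \sim n$; in particular every coordinate $x_i^*$ grows to infinity with $n$, which is what will justify a uniform application of Stirling in the next step.

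Next I would apply Stirling's formula, $m! = \sqrt{2\pi m}(m/e)^m(1+O(1/m))$, to each factorial in $\Prob[\vec{X}=\vec{x}^*] = \frac{n!}{\prod_i x_i^*!}\prod_i p_i^{x_i^*}$. After algebraic rearrangement this yields
$$\Prob[\vec{X}=\vec{x}^*] = \frac{1+o(1)}{(2\pi n)^{(k-1)/2}\sqrt{\prod_i p_i}} \prod_i\left(\frac{np_i}{x_i^*}\right)^{x_i^*},$$
and because $x_i^* = np_i + O(1)$ with $np_i \to \infty$, each factor $(np_i/x_i^*)^{x_i^*}$ tends to a constant (via a one-line Taylor expansion of $x_i^*\log(np_i/x_i^*)$), so the trailing product is $O(1)$. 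It then remains to bound $\prod_i p_i$ from below. Since each of $p_1,\dots,p_{k-1}$ is $o(1)$ and $k$ is fixed, $p_k = 1-\sum_{j<k}p_j \to 1$, so $p_k \geq 1/2$ eventually; together with $p_i \geq c\,n^{-\beta}$ for $i<k$, this gives $\prod_i p_i = \Omega(n^{-\beta(k-1)})$, hence $1/\sqrt{\prod_i p_i} = O(n^{\beta(k-1)/2})$. Assembling the pieces,
$$\max_{\vec{x}}\Prob[\vec{X}=\vec{x}] = \Prob[\vec{X}=\vec{x}^*] = O\!\left(n^{-(k-1)/2}\cdot n^{\beta(k-1)/2}\right) = O\!\left(n^{-(k-1)(1-\beta)/2}\right).$$

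The main technical issue is keeping Stirling's error uniformly controlled across all coordinates: the approximation is only useful when every argument is large, so one genuinely needs $x_i^* \to \infty$ for every $i$, not just $n\to\infty$. This is precisely where the hypothesis $p_i = \Omega(n^{-\beta})$ with $\beta<1$ fixed does the work, ensuring $np_i \geq n^{1-\beta}\to\infty$ and therefore $x_i^* \to \infty$ for each $i$.
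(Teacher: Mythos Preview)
Your proposal is correct and follows essentially the same route as the paper: locate the mode at $x_i^\ast = np_i + O(1)$, apply Stirling to the multinomial mass, bound each factor $(np_i/x_i^\ast)^{x_i^\ast}$ by a constant, and then invoke $p_i = \Omega(n^{-\beta})$ and $p_k \to 1$ to control $\prod_i p_i$. The only cosmetic differences are that the paper cites a reference for the mode location rather than deriving it via the ratio test, and uses one-sided Stirling inequalities rather than the two-sided asymptotic; your write-up is in fact a bit cleaner.
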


\begin{proof}
Let $x^\ast = (x^\ast_1,\dotsc, x^\ast_k)$ be
the mode of $\Mult(n,p_1,\dotsc, p_k)$.
It is known~\cite{ModeMultinomial} that $x^\ast_i = I(np_i)$,
where $I(a)$ is either $\lfloor a\rfloor$ or $\lceil a-1 \rceil$.
This implies that $x^\ast_i \leq np_i \leq x^\ast_i + 1$.
Also, since $\beta < 1$, it holds that $np_i\to\infty$ for all $i$,
which implies that, for large enough $n$, $x^\ast_i \geq 1$ for all $i$.

Using these inequalities, Stirling's approximation,
and known bounds for the exponential function,
we have, for large enough $n$:

\begin{align*}
\max_{\vec{x}} \Prob[\vec{X}=\vec{x}] &= \Prob[\vec{X}=x^\ast] \\
&= \frac{n!}{\prod_{i=1}^n x^\ast_i!}
\prod_{i=1}^n p_i^{x^\ast_i} \\
&\leq \frac{en^ne^{-n}\sqrt{n}}{(\sqrt{2\pi})^k
\prod_{i=1}^k (x^\ast_i)^{x^\ast_i}\sqrt{x^\ast_i}e^{-x^\ast_i}}
\prod_{i=1}^k p_i^{x^\ast_i} \\
&\leq \frac{e\sqrt{n}}{(\sqrt{2\pi})^k}
\prod_{i=1}^k\left(\frac{np_i}{x^\ast_i}\right)^{x^\ast_i}
\frac{1}{\sqrt{x^\ast_i\dotsm x^\ast_{k-1}}}\frac{1}{\sqrt{np_k}} \\
&\leq \frac{e}{(\sqrt{2\pi})^k}
\prod_{i=1}^k\left(1+\frac{1}{x^\ast_i}\right)
\frac{1}{\sqrt{np_1\dotsm np_{k-1}}}\frac{1}{\sqrt{p_k}} \\
&\leq \frac{e}{(\sqrt{2\pi})^k}\cdot e^k \cdot
\frac{1}{\sqrt{np_1\dotsm np_{k-1}}}\frac{1}{\sqrt{p_k}}. \\
\end{align*}

Now, the following inequalities hold for large enough $n$.
First, by hypothesis, for every $i\leq k-1$,
we have $p_i \geq c_i n^{-\beta}$ for some constant $c_i$.
Second, $p_k \geq c_k$ for some constant $c_k$,
since the hypotheses imply that $p_k\to 1$.
Thus:

\begin{align*}
\max_{\vec{x}} \Prob[\vec{X}=\vec{x}]
&\leq \frac{e}{(\sqrt{2\pi})^k}\cdot e^k \cdot
\frac{1}{\sqrt{np_1\dotsm np_{k-1}}}\frac{1}{\sqrt{p_k}} \\
&\leq \frac{e^{k+1}}{(\sqrt{2\pi})^k}
\frac{1}{\sqrt{c_1\dotsm c_{k-1}}\cdot c_k}
\frac{1}{(\sqrt{n^{1-\beta}})^{k-1}} \\
&= Kn^{-(k-1)(1-\beta)/2}
\end{align*}
for some constant $K$. This concludes the proof.
\end{proof}

\begin{theorem}[Theorem 3.2 in~\cite{TwoRandomGraphs}]
\label{deg-seq-approx}
Let $\vec{D}_n^{(1)}$ and $\vec{D}_n^{(2)}$ be the degree sequences
of two independent $G(n,p)$ random graphs,
with probability distribution $\Prob_{\mathcal{D}_n}$.

Furthermore, let $\mathcal{F}_n$ be
the $\sigma$-algebra generated by $\vec{D}_n^{(1)}$ and $\vec{D}_n^{(2)}$,
and let $\Prob_{\mathcal{B}_n}$ be a probability measure under which
$\vec{D}_n^{(1)}$ and $\vec{D}_n^{(2)}$ are random vectors
with $n$ independent coordinates, each having distribution $\Bin(n-1,p)$.

Then, for any sequence of events $A_n$
measurable under $\mathcal{F}_n^{\otimes 2}$
and any fixed $a>0$,
if $\Prob_{\mathcal{B}_n}^{\otimes 2}(A_n) = o(n^{-a})$,
then $\Prob_{\mathcal{D}_n}^{\otimes 2}(A_n) = o(n^{-a})$.
\end{theorem}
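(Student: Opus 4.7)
The plan is to establish a pointwise bound on the Radon--Nikodym derivative $d\Prob_{\mathcal{D}_n}/d\Prob_{\mathcal{B}_n}$ on a set of ``typical'' degree sequences and then lift it to the product measures. Since $A_n$ is measurable with respect to $\mathcal{F}_n^{\otimes 2}$---the $\sigma$-algebra generated by the two degree sequences---we may write
\begin{equation*}
\Prob_{\mathcal{D}_n}^{\otimes 2}(A_n) = \sum_{(\vec{d},\vec{d}')\in A_n}\frac{\Prob_{\mathcal{D}_n}(\vec{D}=\vec{d})}{\Prob_{\mathcal{B}_n}(\vec{B}=\vec{d})}\cdot\frac{\Prob_{\mathcal{D}_n}(\vec{D}=\vec{d}')}{\Prob_{\mathcal{B}_n}(\vec{B}=\vec{d}')}\cdot\Prob_{\mathcal{B}_n}(\vec{B}=\vec{d})\,\Prob_{\mathcal{B}_n}(\vec{B}=\vec{d}'),
\end{equation*}
so it suffices to bound the marginal ratio uniformly on a set of sufficiently high probability.

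First I would define a typical set $T_n$ of degree sequences whose entries all lie in a narrow Chernoff window around $(n-1)p$ and whose total $\sum_i d_i$ lies within a constant multiple of $\sqrt{n^2 p(1-p)}$ of its mean $n(n-1)p$. Under $\Prob_{\mathcal{B}_n}$, standard concentration inequalities for independent binomials show that $\Prob_{\mathcal{B}_n}(T_n^c)$ decays faster than any polynomial in $n$. Under $\Prob_{\mathcal{D}_n}$, the marginal law of each $D_i$ is $\Bin(n-1,p)$ and $|E(G(n,p))|\sim\Bin(\binom{n}{2},p)$, so a union bound and concentration of $|E|$ give the same conclusion.

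Second, on $T_n$ I would analyse the ratio
\begin{equation*}
\frac{\Prob_{\mathcal{D}_n}(\vec{D}=\vec{d})}{\Prob_{\mathcal{B}_n}(\vec{B}=\vec{d})}=\frac{N(\vec{d})}{\prod_i\binom{n-1}{d_i}\,p^{|E|}(1-p)^{\binom{n}{2}-|E|}},
\end{equation*}
where $N(\vec{d})$ is the number of simple labelled graphs with degree sequence $\vec{d}$ and $|E|=\tfrac{1}{2}\sum_i d_i$. Using the classical asymptotic enumeration formula for $N(\vec{d})$ valid on sequences satisfying the regularity encoded in $T_n$, the ratio equals $(1+o(1))$ times a local-CLT correction factor depending only on $\sum_i d_i$, which is bounded by an absolute constant $C$ on $T_n$. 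Multiplying the marginal bounds and handling $(T_n\times T_n)^c$ via the concentration above yields
\begin{equation*}
\Prob_{\mathcal{D}_n}^{\otimes 2}(A_n)\leq C^2\,\Prob_{\mathcal{B}_n}^{\otimes 2}(A_n) + o(n^{-a}) = o(n^{-a}),
\end{equation*}
which is the desired conclusion.

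The main obstacle is the second step. Producing a \emph{constant}-factor (rather than $O(\sqrt{n})$) bound on the marginal ratio requires the full precision of the asymptotic enumeration of graphs with prescribed degree sequence, which is a nontrivial analytic/combinatorial result. Verifying that this enumeration applies with the needed error control throughout the range of $p$ allowed, and calibrating the window defining $T_n$ so that simultaneously the enumeration hypotheses hold on $T_n$ \emph{and} $T_n^c$ retains sub-polynomial probability under both measures, is the delicate technical balancing act on which the argument hinges.
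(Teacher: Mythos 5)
The paper itself does not prove this statement: it is imported verbatim as Theorem 3.2 of the cited reference \cite{TwoRandomGraphs}, so there is no internal proof to compare against, and your attempt must be judged on its own terms.

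Judged so, it has a genuine gap, and it is exactly the one you flag at the end --- but it is not a ``delicate balancing act'' that can be carried out; the two requirements you place on $T_n$ are mutually exclusive. The obstruction is that the two models disagree about the distribution of the degree sum $S=\sum_i d_i$ at the level of its \emph{variance}, not merely its parity. Under $\Prob_{\mathcal{D}_n}$ one has $S=2|E|$ with $\mathrm{Var}(S)=4\binom{n}{2}p(1-p)=2n(n-1)p(1-p)$, whereas under $\Prob_{\mathcal{B}_n}$ the sum of $n$ independent $\Bin(n-1,p)$ variables has $\mathrm{Var}(S)=n(n-1)p(1-p)$: the graph model's degree sum has twice the variance (and lives on the even integers). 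A local-CLT computation then shows that on sequences whose sum deviates from its mean by $x\sigma$, with $\sigma^2=n(n-1)p(1-p)$, the ``correction factor depending only on $\sum_i d_i$'' that you invoke is of order $\sqrt{2}\,e^{x^{2}/4}$. Consequently it is $O(1)$ only on windows $|S-n(n-1)p|=O(\sigma)$ --- precisely your definition of $T_n$ --- but the complement of such a window has \emph{constant} probability by the CLT, not probability decaying faster than every polynomial as you assert, so the error term in your final display is $O(1)$ rather than $o(n^{-a})$. If instead you widen the window so that $\Prob_{\mathcal{D}_n}((T_n)^c)=o(n^{-a})$, you are forced to allow deviations $x\gtrsim 2\sqrt{a\log n}$, at which point the likelihood ratio at the edge of the window is of order $n^{a}$; the trade-off is exactly tight, so the best a pointwise Radon--Nikodym bound of this kind can deliver is $\Prob_{\mathcal{D}_n}^{\ox 2}(A_n)\leq n^{O(a)}\,\Prob_{\mathcal{B}_n}^{\ox 2}(A_n)+o(n^{-a})$, which loses a polynomial factor and does not give the stated conclusion with the same exponent $a$. (The appeal to asymptotic enumeration of graphs with a given degree sequence is also a heavy and range-sensitive input, but it is not the fatal issue.) Any correct proof must neutralize the variance mismatch of the sum by a finer device than a uniform density bound --- for instance the McKay--Wormald-type description of the degree sequence as independent binomials with a randomly perturbed success probability, conditioned on even sum --- which is the kind of machinery the cited reference relies on.
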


In a nutshell, \autoref{deg-seq-approx} allows us to
consider the degree sequence of two $G(n,p)$ random graphs
as sequences of independent random variables,
without interfering with power-law decays
in the probability of events on this model.

\begin{theorem}
\label{large-dsed}
Let $G_1$ and $G_2$ be independent $G(n,p)$ random graphs,
with $p$ satisfying $\omega(\log n/n) \leq p \leq o(n^{-1/2})$.
Then, for any $\varepsilon > 0$ and any $a > 0$,
$$\dsed(G_1,G_2) \geq n^{1/2-\varepsilon}$$
with probability $1-o(n^{-a})$.
\end{theorem}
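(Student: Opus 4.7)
My plan is to use \autoref{deg-seq-approx} to replace the $G(n,p)$ model by an independence model, and then lower-bound $\dsed$ through a combination of per-bucket anti-concentration (from \autoref{rev-chernoff}) and an independence argument across many degree values. By \autoref{deg-seq-approx}, it suffices to prove $\Prob_{\mathcal{B}_n}^{\otimes 2}[\dsed(G_1,G_2) < n^{1/2-\varepsilon}] = o(n^{-a})$ in the model where the $2n$ vertex degrees of the two graphs are i.i.d.\ $\Bin(n-1, p)$. In this model, writing $A_k, B_k$ for the number of vertices with degree $k$ in each graph, $(A_k)$ and $(B_k)$ are independent multinomials with $n$ trials and cell probabilities $q_k = \Prob[\Bin(n-1,p) = k]$, and $\dsed = \sum_k |A_k - B_k|$.

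Next, I identify a bulk set of degrees $K^*$ on which $q_k \asymp 1/\sqrt{\mu}$, where $\mu = (n-1)p$. Applying \autoref{mode-multinom} to the two-cell multinomial $(q_k, 1-q_k)$ gives $\max_k q_k = O(1/\sqrt{\mu})$; together with the shape of $\Bin(n-1,p)$, there are $\Theta(\sqrt{\mu})$ degrees near $\mu$ attaining this scale. For each $k \in K^*$, we have $nq_k \asymp n/\sqrt{\mu} \to \infty$, so \autoref{rev-chernoff} (with $\varepsilon$ chosen so that $nq_k\varepsilon^2 = C^2$ for a large constant $C \ge \sqrt{3}$) yields $\Prob[|A_k - nq_k| \ge C\sqrt{nq_k}] \ge 2 e^{-9C^2}$. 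The same bound holds independently for $B_k$, so the event $E_k = \{|A_k - B_k| \ge 2C\sqrt{nq_k}\}$ has probability at least a positive constant $p_0$, uniformly in $k \in K^*$.

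I then combine the events $E_k$ across $k \in K^*$ via a Poissonization coupling, under which the $E_k$ become independent and which preserves polynomial-rate probabilities. A standard Chernoff lower-tail bound shows that $\sum_{k\in K^*}\Ind_{E_k} \ge p_0|K^*|/2$ except with probability $\exp(-c\sqrt{\mu})$; on this good event,
$$\dsed \ge \sum_{k \in K^*,\, E_k \text{ holds}} 2C\sqrt{nq_k} \asymp \sqrt{\mu}\cdot \frac{n^{1/2}}{\mu^{1/4}} = n^{1/2}\mu^{1/4},$$
which exceeds $n^{1/2-\varepsilon}$ for every $\varepsilon > 0$ since $\mu = \omega(\log n)$.

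The main obstacle is pushing the failure probability $\exp(-c\sqrt{\mu})$ down to the full polynomial rate $o(n^{-a})$ throughout the hypothesized range $p = \omega(\log n/n)$: doing so rigorously requires either a moment-based refinement of the combination step or a careful choice of bucket granularity trading off independence against statistical power from each bucket. The auxiliary Poissonization step is also technically delicate, but it is a well-known device and \autoref{deg-seq-approx} provides exactly the polynomial-loss tolerance needed to absorb the coupling error.
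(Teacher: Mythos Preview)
Your proposal has a genuine gap that you yourself identify but do not close. The failure probability $\exp(-c\sqrt{\mu})$ from your Chernoff step is \emph{not} $o(n^{-a})$ across the whole hypothesis range: with $\mu=(n-1)p=\omega(\log n)$ one only gets $\sqrt{\mu}=\omega(\sqrt{\log n})$, and for, say, $p=(\log n)(\log\log n)/n$ we have $\sqrt{\mu}=o(\log n)$, so $\exp(-c\sqrt{\mu})\gg n^{-a}$. Your suggested fixes (``moment-based refinement'' or ``bucket granularity trade-off'') are not carried out, so as written the argument only covers $p=\omega((\log n)^2/n)$. There is also a smaller issue: from $\Prob[|A_k-nq_k|\ge C\sqrt{nq_k}]\ge p_0$ and the analogous bound for $B_k$ you cannot directly conclude $\Prob[|A_k-B_k|\ge 2C\sqrt{nq_k}]\ge p_0$, since both deviations may have the same sign; you would need a one-sided version of \autoref{rev-chernoff} and then multiply the two independent tail events.

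The paper avoids this difficulty by a different mechanism. Rather than summing many weakly anti-concentrated per-degree contributions, it fixes a \emph{constant} number $b$ of buckets $B_1,\dots,B_b$ (chosen depending on $a$ and $\varepsilon$), with widths tuned via the factor $f(n,p)=(\min\{\log n,(n-1)p\})^{1/4}$ so that each bucket probability satisfies $\Omega(n^{-\beta})\le p_i\le o(1)$ for some $\beta<\varepsilon$. Then \autoref{mode-multinom} bounds the mode of the bucketed count vector $\vec T^{(2)}\sim\Mult(n,p_1,\dots,p_{b+1})$ by $O(n^{-b(1-\beta)/2})$, and a simple counting of lattice points in the $\ell_1$-ball gives
\[
\Prob\bigl[\|\vec T^{(1)}-\vec T^{(2)}\|_1<n^{1/2-\varepsilon}\bigr]\le (2n^{1/2-\varepsilon}+1)^b\cdot O(n^{-b(1-\beta)/2})=O(n^{-b(\varepsilon-\beta)/2}).
\]
Choosing $b>2a/(\varepsilon-\beta)$ yields $o(n^{-a})$ directly, with no dependence on the size of $\mu$ beyond what is needed to place the bucket probabilities in the range required by \autoref{mode-multinom}. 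The point is that the polynomial rate comes from the \emph{number of buckets}, not from the number of degree values, so the argument works uniformly down to $p=\omega(\log n/n)$.
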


\begin{proof}
This proof will proceed as follows.
Instead of looking at the whole degree sequences of $G_1$ and $G_2$,
we will group several ranges of degrees into ``buckets'',
according to their distances to the expected degree of $G(n,p)$.
This will allow us to bound the probability that
each vertex belongs to each bucket, using Chernoff-like bounds.
Considering the degrees as independent random variables characterizes
the bucketed degree sequences as multinomial random vectors.
Furthermore, the buckets themselves are carefully chosen so that
the distribution of these vectors is not too concentrated, i.e.,
the probability of their modes is large enough.
This means these two vectors will most likely be
far apart from each other in $L_1$-norm,
which is the desired result.

We begin by noting that $\Delta(G_1,G_2) = \int|\df{G_1}-\df{G_2}|~\dd\mu$
is a function of the degree sequences of $G_1$ and $G_2$.
Let $\df{G_1}',\df{G_2}'$ be the degree functions
obtained by approximating these degree sequences
by sequences of $n$ independent $\Bin(n-1,p)$ random variables.
By virtue of \autoref{deg-seq-approx}, it is enough to show that
$$\int|\df{G_1}'-\df{G_2}'|~\dd\mu \geq n^{1/2-\varepsilon}$$
with probability $1-o(n^{-a})$.

Fix real positive numbers $\alpha>\sqrt{27}$ and $\beta<\varepsilon$,
and choose a natural number $b > 2a/(\varepsilon-\beta)$.
Let the real positive intervals $S_1,\dots,S_b$ be defined as
$$S_i = [\alpha^{-(i+1)}\sqrt{(n-1)p}\cdot f(n,p),
\alpha^{-i}\sqrt{(n-1)p}\cdot f(n,p)),$$
where $f(n,p) = (\min\{\log n, (n-1)p\})^{1/4}$.
For ease the notation, let the set
$S_{b+1} = \Real_+\setminus\cup_{i=1}^b S_i$
contain the remainder of the positive real line.

We now use these sets to group the vertices
in both graphs into ``buckets'', according to their degrees,
with set $S_i$ indicating the set of allowed degrees
according to their distances to the average degree $(n-1)p$.
More formally, define the sets of integers $B_1,\dots,B_{b+1}$
as
$$B_i = \{k\in\Nat_0~:~|k-(n-1)p|\in S_i\}.$$

Note that $\cup_i B_i = \Nat_0$ and
$B_i\cap B_j = \emptyset$ whenever $i\neq j$.
Now, write $T^{(1)}_i = \int_{B_i}\df{G_1}'~\dd\mu$.
$T^{(1)}_i$ counts the number of vertices of $G_1$ with degrees in $B_i$.
Similarly, write $T^{(2)}_i = \int_{B_i}\df{G_2}'~\dd\mu$.
Then it holds that
\begin{align*}
\int|\df{G_1}'-\df{G_2}'|~\dd\mu
&= \sum_{i=1}^{b+1} \int_{B_i}|\df{G_1}'-\df{G_2}'|~\dd\mu \\
&\geq \sum_{i=1}^{b+1} \left|\int_{B_i}\df{G_1}'\dd\mu -
\int_{B_i}\df{G_2}'~\dd\mu\right| \\
&= \sum_{i=1}^{b+1} |T_i^{(1)}-T_i^{(2)}| = \|\vec{T}^{(1)}-\vec{T}^{(2)}\|_1,
\end{align*}
where $\vec{T}^{(j)} = (T_1^{(j)},\dots,T_{b+1}^{(j)})$.

This means that
\begin{align*}
\Prob\left[\int|\df{G_1}'-\df{G_2}'|~\dd\mu \geq n^{1/2-\varepsilon}\right]
&\leq\Prob\left[\|\vec{T}^{(1)}-\vec{T}^{(2)}\|_1
\geq n^{1/2-\varepsilon}\right]\\
&=\sum_{\vec{t}}\Prob\left[\vec{T}^{(1)}=\vec{t},
\|\vec{T}^{(2)}-\vec{t}\|_1\geq n^{1/2-\varepsilon}\right] \\
&=\sum_{\vec{t}}\Prob\left[\vec{T}^{(1)}=\vec{t}\right]
\Prob\left[\|\vec{T}^{(2)}-\vec{t}\|_1\geq n^{1/2-\varepsilon}\right] \\
&\leq\sum_{\vec{t}}\Prob\left[\vec{T}^{(1)}=\vec{t}\right] \max_{\vec{t}}
\Prob\left[\|\vec{T}^{(2)}-\vec{t}\|_1\geq n^{1/2-\varepsilon}\right] \\
&= \max_{\vec{t}}
\Prob\left[\|\vec{T}^{(2)}-\vec{t}\|_1\geq n^{1/2-\varepsilon}\right].
\end{align*}

Note that, for any $\vec{t}$,
the event $\{\|\vec{T}^{(2)}-\vec{t}\|_1\geq n^{1/2-\varepsilon}\}$
has at most $(2n^{1/2-\varepsilon}+1)^b$ elements:
each of the first $b$ coordinates of $\vec{T}^{(2)}$
must be at distance at most $n^{1/2-\varepsilon}$
from the corresponding coordinate of $\vec{t}$
(for a maximum of $2n^{1/2-\varepsilon}+1$ valid options),
and the last one is uniquely determined from the previous choices,
since the coordinates must sum up to $n$.
This means that:

\begin{align*}
\Prob\left[\int|\df{G_1}'-\df{G_2}'|~\dd\mu \geq n^{1/2-\varepsilon}\right]
&\leq \max_{\vec{t}}
\Prob\left[\|\vec{T}^{(2)}-\vec{t}\|_1\geq n^{1/2-\varepsilon}\right] \\
&\leq (2n^{1/2-\varepsilon}+1)^b
\max_{\vec{t}}\Prob\left[\vec{T}^{(2)}=\vec{t}\right].
\end{align*}

Now, the degree of every vertex in $G_1$ or $G_2$ must belong to some $B_i$,
and these degrees are deemed to be i.i.d.\@ random variables,
by our initial argument regarding $\df{G_1}'$ and $\df{G_2}'$.
This implies that $\vec{T}^{(1)},\vec{T}^{(2)}$ are
multinomial random vectors $\Mult(n,p_1,\dots,p_{b+1})$,
where $p_i = \Prob[\Bin(n-1,p) \in B_i]$ is
the probability that the degree of a vertex belongs to $B_i$.

At this moment, to apply \autoref{mode-multinom},
we would like to bound from both sides the value of $p_i$ (for $i\leq b$).
For an upper bound, an application of the Chernoff bound suffices:

\begin{align*}
p_i &= \Prob[\Bin(n-1,p)\in B_i] \\
&\leq \Prob[|\Bin(n-1,p)-(n-1)p|\geq\alpha^{-(i-1)}\sqrt{(n-1)p}\cdot f(n,p)]\\
&\leq 2\exp\{-f(n,p)^2\alpha^{-2(i+1)}/3\},
\end{align*}
which is $o(1)$, since $f(n,p)\to\infty$.

For a lower bound, we use the Chernoff bound
and an application of \autoref{rev-chernoff},
noting that $\alpha^{-2(i+1)}f(n,p)^2\geq 3$ for large enough $n$:
\begin{align*}
p_i &= \Prob[\Bin(n-1,p)\in B_i] \\
&= \Prob[|\Bin(n-1,p)-(n-1)p|\geq \alpha^{-(i-1)}\sqrt{(n-1)p}\cdot f(n,p)] \\
&\qquad- \Prob[|\Bin(n-1,p)-(n-1)p|\geq \alpha^{-i}\sqrt{(n-1)p}\cdot f(n,p)]\\
&\geq 2\exp\{-9\alpha^{-2(i+1)}f(n,p)^2\}-2\exp\{-\alpha^{-2i}f(n,p)^2/3\} \\
&= 2\exp\{-9\alpha^{-2(i+1)}f(n,p)^2\}
(1-\exp\{-\alpha^{-2(i+1)}f(n,p)^2\}^\gamma),
\end{align*}
where, in the last passage, we let $\gamma = \alpha^2/3-9 > 0$.
Note that $\gamma>0$ and $f(n,p)\to\infty$ imply that
the term inside the parentheses tends to 1.
The remaining exponential satisfies
\begin{align*}
n^\beta\exp\{-9\alpha^{-2(i+1)}f(n,p)^2\}
&\geq \exp\{\beta\log n-9\alpha^{-2(i+1)}\sqrt{\log n}\} \\
&= \exp\{\beta\log n(1-9\alpha^{-2(i+1)}(\log^{-1/2} n))\} \\
&= \omega(1),
\end{align*}
thus the expression on the right-hand side is $\omega(n^{-\beta})$.

This means that the conditions of \autoref{mode-multinom} are satisfied
for random vectors $\vec{T}^{(1)}$ and $\vec{T}^{(2)}$, with $k = b+1$.
Therefore, $\max_{\vec{t}}\Prob[\vec{T}^{(2)}=\vec{t}] = O(n^{-b(1-\beta)/2})$,
and, by the choices of $b$ and $\beta$,
\begin{align*}
\Prob\left[\int|\df{G_1}'-\df{G_2}'|~\dd\mu \geq n^{1/2-\varepsilon}\right]
&\leq (2n^{1/2-\varepsilon}+1)^b
\max_{\vec{t}}\Prob\left[\vec{T}^{(2)}=\vec{t}\right] \\
&= \theta(n^{b(1/2-\varepsilon)})O(n^{-b(1-\beta)/2}) \\
&= O(n^{-b(\varepsilon-\beta)/2}) \\
&= o(n^{-a}).
\end{align*}
\end{proof}

With this, we can proceed to the proof of \autoref{asym}.

\begin{proof}[Proof of \autoref{asym}]
Let $G = (V,E)$ be a $G(n,p)$ random graph.
By the union bound, it is enough to prove that
any two distinct vertices $v_1,v_2$
are locally symmetric with probability $o(n^{-2})$.

Let $v_1,v_2\in V$ be arbitrary distinct vertices, and denote
by $X_1$ the set of neighbors of $v_1$ that are not neighbors of $v_2$,
by $X_2$ the set of neighbors of $v_2$ that are not neighbors of $v_1$,
and by $Y$ the set of common neighbors of $v_1$ and $v_2$.
Additionally, denote by $C_1$ the number of edges between $X_1$ and $Y$,
and by $C_2$ the number of edges between $X_2$ and $Y$.
Our goal is to show that, with probability $o(n^{-2})$,
$\Nhood{v_1}=G[X_1\cup Y]$ and $\Nhood{v_2}=G[X_2\cup Y]$ are not isomorphic.
Note that these two graphs are not independent,
since they share $G[Y]$ as a subgraph.
Our goal is to show that, even if $G[Y]$ is removed from these two subgraphs,
their degree sequences have large enough edit distance, by \autoref{large-dsed},
that the effect of reinserting $G[Y]$,
which is bounded by \autoref{dsed-subgraph},
is not enough to make these degree sequences equal,
which implies $\Nhood{v_1}$ and $\Nhood{v_2}$ cannot possibly be isomorphic.

Before proceeding to this, we will need a few concentration bounds
that will help us carry out our proof.
First, note that $v_2\in X_1$ and $v_1\in X_2$
if and only if $u$ and $v$ are neighbors,
and that all other vertices belong to $X_1$, $X_2$ and $Y$
independently from each other,
with probabilities $p(1-p)$, $p(1-p)$ and $p^2$, respectively.
Thus, $|X_1|,|X_2|\leD \Bin(n-2,p)+1$ and $|Y|\eqD \Bin(n-2,p^2)$.
By a similar reasoning, given $X_1$, $X_2$, and $Y$, it holds that
$C_1\eqD\Bin(|X_1||Y|,p)$ and $C_2\eqD\Bin(|X_2||Y|,p)$.

Now, set $0 < \varepsilon_1 < 1$ constant, $\varepsilon_2 = \log n$, and
$\varepsilon_3 > 0$ constant.
Define the following events:
\begin{align*}
A_1 &= \{(n-2)p(1-p)(1-\varepsilon_1) < |X_1|,|X_2|
< (n-2)p(1-p)(1+\varepsilon_1)\}, \\
A_2 &= \{|Y| < (n-2)p^2(1+\varepsilon_2)\}, \\
A_3 &= \{C_1,C_2 < sp(1+\varepsilon_3)\}, \\
\end{align*}
where $s = (n-2)^2p^3(1-p)(1+\varepsilon_1)(1+\varepsilon_2)$.

We will show that $\Prob(A_1,A_2,A_3) \geq 1 - o(n^{-2})$.
First, by the Chernoff bound,
\begin{align*}
\Prob(\overline{A_1}) &\leq
2\exp\{-(n-2)p(1-p)\varepsilon_1\min(1,\varepsilon_1)/3\} \\
&\qquad + 2\exp\{-(n-2)p(1-p)\varepsilon_1\min(1,\varepsilon_1)/2\}, \\
\Prob(\overline{A_2}) &\leq
\exp\{-(n-2)p^2\varepsilon_2\min(1,\varepsilon_2)/3\}.
\end{align*}

Furthermore, in the event $A_1\cap A_2$,
it holds that $|X_1||Y|,|X_2||Y| \leq s$,
which implies $C_1,C_2\leD \Bin(s,p)$ and, again by the Chernoff bound:

$$\Prob(A_1,A_2,\overline{A_3}) \leq
2\exp\{-sp\varepsilon_3\min(1,\varepsilon_3)/3\}.$$

Note that the upper bounds for $\Prob(\overline{A_1})$, $\Prob(\overline{A_2})$
and $\Prob(A_1,A_2,\overline{A_3})$ are all $o(n^{-2})$,
since $(n-2)p \geq \omega(\log n)$ and
$(n-2)p^2\varepsilon_2 \geq \omega(1)$ by hypothesis,
and $sp \geq \omega(\log n)$ as a consequence.
Thus, $\Prob(A_1,A_2,A_3) \geq 1 - o(n^{-2})$.

We can now resume the main thread in our proof and show that,
conditional on $A_1\cap A_2\cap A_3$,
$v_1$ and $v_2$ are locally asymmetric with probability $1-o(n^{-2})$.
Recall that $v_1$ and $v_2$ are locally symmetric only if
$\dsed(\Nhood{v_1},\Nhood{v_2}) = 0$.
We can assume that $v_1$ and $v_2$ have the same degree ---
i.e. $|\nhood{v_1}| = |\nhood{v_2}|$,
otherwise $\Nhood{v_1}$ and $\Nhood{v_2}$ have vertex sets of different sizes,
which implies $v_1$ and $v_2$ are locally asymmetric.
Note that this implies that $|X_1| = |X_2|$:
if $v_1$ and $v_2$ are adjacent, then
$\nhood{v_1} = X_1\uplus Y\uplus \{v_2\}$ and
$\nhood{v_2} = X_2\uplus Y\uplus \{v_1\}$;
if they are not adjacent, then
$\nhood{v_1} = X_1\uplus Y$ and $\nhood{v_2} = X_2\uplus Y$.

Let $N_1 = (V_1,E_1) = G[\nhood{v_1}\setminus\{v_1\}]$ and
$N_2 = (V_2,E_2) = G[\nhood{v_2}\setminus\{v_2\}]$, i.e.,
$\Nhood{v_1}$ and $\Nhood{v_2}$ with their centers removed.
Since $v_1$ and $v_2$ are universal vertices of their respective neighborhoods,
their removal keeps the edit distance between them unchanged, that is,
$$\dsed(\Nhood{v_1},\Nhood{v_2}) = \dsed(N_1,N_2).$$

Since $\dsed$ is a semimetric, it further holds that
$$\dsed(N_1,N_2) \geq \dsed(G[X_1],G[X_2])
- \dsed(G[X_1],N_1) - \dsed(G[X_2],N_2).$$

To lower bound the first term on the right-hand side,
we note that the existence of edges between vertices in $X_1$
happens with probability $p$
independently of the particular vertex pair,
thus $G[X_1]$ is a $G(|X_1|,p)$ random graph.
Since $p = \omega(n^{-1/2+\delta_1})$, by hypothesis,
and $|X_1| = \theta(np)$ in $A_1\cap A_2\cap A_3$,
it holds, for $n$ large enough and for constants $c,c'>0$, that
\begin{align*}
\frac{p}{\log |X_1|/|X_1|}
&\geq \frac{p\cdot cnp}{\log c'np} \\
&\geq \frac{\omega(n^{2\delta_1})}{\log c'n} \\
&\geq \omega(1).
\end{align*}

It also holds that $|X_1|p^2 = p\cdot\theta(np^3) = o(1)$.
Together, these inequalities imply
$\omega(\log|X_1|/|X_1|) \leq p \leq  o(|X_1|^{-1/2})$,
and therefore $G[X_1]$ satisfies the hypotheses of \autoref{large-dsed}.
An analogous argument implies that $G[X_2]$ also satisfies these hypotheses.

Furthermore, note that $G[X_1]$ and $G[X_2]$ are independent,
since their vertex sets are disjoint.
This allows us to apply the results of \autoref{large-dsed},
picking fixed $a=\frac{14}{1-14\delta_2}$ and
$\varepsilon < \frac{49\delta_2}{8-14\delta_2}$:
\begin{align*}
\Prob[\dsed(G[X_1],G[X_2]) \geq |X_1|^{1/2-\varepsilon}|A_1\cap A_2\cap A_3]
&= 1-o((|X_1|p)^{-a}) \\
&= 1-o((np^2)^{-a}) \\
&\geq 1-o(n^{-a(1+2(-3/7-\delta_2))}) \\
&= 1-o(n^{-a(1/7-2\delta_2)}) \\
&= 1-o(n^{-2}).
\end{align*}

Note that these choices of $a$ and $\varepsilon$ are always possible and valid,
since the hypotheses imply $\delta_2 < 1/14$,
which makes $\frac{49\delta_2}{8-14\delta_2} \geq 7\delta_2 > 0$
and $\frac{14}{4-7\delta_2} > 0$.
The two bounds on $|X_1|$ from the definition of event $A_1$
imply that, with probability $1-o(n^{-2})$,
$$\Delta(G[X_1],G[X_2]) \geq \Omega((np)^{1/2-\varepsilon}).$$

To upper bound the remaining two terms,
we begin by applying \autoref{dsed-subgraph}:
$$\dsed(G[X_1],N_1) \leq |V_1\setminus X_1| + |C(X_1)|.$$

Now, all vertices of $Y$ are in $V_1\setminus X_1$,
and so is $v_2$ if it is a neighbor of $v_1$ in $G$.
All other vertices of $G$ either are in $X_1$ or are not in $V_1$, thus
$|V_1\setminus X_1| \leq |Y| + 1$.
Furthermore, $|C(X_1)|$ counts all edges from $X_1$ to $V_1\setminus X_1$,
regardless of whether $v_2$ belongs to $V_1\setminus X_1$,
since no edge connects $X_1$ to $v_2$ by construction.
Thus, $|C(X_1)|$ counts edges from $X_1$ to $Y$, which implies $|C(X_1)| = C_1$.
It follows that

\begin{align*}
\dsed(G[X_1],N_1) &\leq |Y| + 1 + C_1 \\
&\leq (n-2)p^2(1+\varepsilon_2) +
(n-2)^2p^4(1-p)(1+\varepsilon_1)(1+\varepsilon_2)(1+\varepsilon_3) \\
&= \theta(n^2p^4\log n).
\end{align*}

By an analogous argument, the same inequality holds for $\dsed(G[X_2],N_2)$.
Now, note that

\begin{align*}
\frac{\dsed(G[X_1],N_1)+\dsed(G[X_2],N_2)}{\dsed(G[X_1],G[X_2])}
&\leq \frac{\theta(n^2p^4\log n)}{\Omega((np)^{1/2-\varepsilon})} \\
&= o(n^{3/2+\varepsilon}p^{7/2+\varepsilon}\log n) \\
&\leq o(n^{3/2+\varepsilon}n^{(-3/7-\delta_2)(7/2+\varepsilon)}\log n) \\
&= o(n^{(4-7\delta_2)\varepsilon/7-7\delta_2/2}\log n)
\end{align*}

The exponent of $n$ in this expression satisfies
\begin{align*}
\frac{(4-7\delta_2)\varepsilon}{7}-\frac{7\delta_2}{2}
&< \frac{4-7\delta_2}{7}\cdot\frac{49\delta_2}{8-14\delta_2}
-\frac{7\delta_2}{2} \\
&= \frac{7\delta_2}{2}-\frac{7\delta_2}{2} = 0,
\end{align*}
thus the left-hand side is $o(1)$.
As a consequence, $\dsed(\Nhood{v_1},\Nhood{v_2}) = \dsed(N_1,N_2)
\geq \dsed(G[X_1],G[X_2])(1+o(1))\geq\omega(1)$.
Therefore, conditional on $A_1\cap A_2\cap A_3$,
with probability $1-o(n^{-2})$, $v_1$ and $v_2$ are locally asymmetric.
\end{proof}

\section{Conclusion}

In this paper, we have introduced the concept of local symmetry in graphs,
and studied the asymptotic presence and absence of this property
in the Erdős-Rényi random graph model,
rigorously establishing regimes for either behavior to emerge a.a.s..
It is important to note that, in this model,
one can find locally symmetric graphs with high probability
even in regimes of unrealistically high average degree, close to $n^{1/3}$.
A natural question to ask is whether this behavior persists
when looking at $k$-local symmetry for $k>1$.
To answer this question would require a deeper understanding of
the combinatorial aspects of this problem.

One might also ask whether, and to which extent,
real-world networks exhibit local symmetry.
This would certainly depend on both
the nature of the network and its formation process.
However, for certain classes of networks,
the extant literature allows us to develop
some intuition about what answer to expect.
For instance, in the social network literature,
a number of recent works have attempted to explore
the limits of network
anonymization~\cite{ResistingStructReident}~\cite{DeAnonSocNets}.
In particular, a technique known as
\emph{percolation graph matching}, or PGM~\cite{PerformancePGM},
has been successfully used to match common nodes
in the Twitter and Flickr networks~\cite{DeAnonSocNets},
thus allowing knowledge of one network to be used to
break the anonymity of the other.
Intuitively, this suggests that each node in these social networks
can be uniquely identified structurally within them,
from which we would conclude that the networks are globally asymmetric.
However, the fact that the PGM technique works by exploiting local neighborhoods
would also indicate that these networks exhibit some kind of local asymmetry.
Further investigation of this matter would allow us to start exploring
the applicability of the local symmetry concept to real-world networks.

It should be noted that, unlike the $G(n,p)$ random graphs,
real-world networks exhibit much higher structural diversity of vertices.
For instance, networks such as the Internet~\cite{PowerLawInternetTopology},
the Web~\cite{GraphStructWeb},
and scientific collaboration networks~\cite{StructSciCollabNetworks}
are believed to have heavy-tailed degree distributions.
This diversity can lead to a similarly diverse behavior regarding
global and local symmetry.
It is known that most real-world networks are globally symmetric,
but the automorphism group of these networks is due to
a large number of small subgraphs which are themselves symmetric
and comprise vertices with small degrees~\cite{SymmComplexNetworks},
with high degree vertices being asymmetric to any other vertex in the network.
While a simple metric, such as the fraction of vertices with
at least one globally or locally symmetric counterpart,
would begin to shed a light on this phenomenon,
more fine-grained symmetry metrics are desired to
capture it in more detail.

\bibliographystyle{amsplain}
\bibliography{local_sym}

\end{document}